\documentclass{article}
\usepackage[utf8]{inputenc}
\usepackage{amsmath,amsthm,amssymb,amsfonts}
\usepackage{hyperref,color}
\voffset -2.5cm%
\hoffset -2.5cm%
\textheight 24cm%
\textwidth 16.5cm%

\def\a{\mathbf{a}}
\def\b{\mathbf{b}}
\def\c{\mathbf{c}}
\def\d{\mathbf{d}}

\def\N{\mathbb{N}}
\def\R{\mathbb{R}}

\newtheorem{theorem}{Theorem}[section]
\newtheorem{definition}[theorem]{Definition}
\newtheorem{lemma}[theorem]{Lemma}

\title{Unimodality preservation by ratios of functional series and integral transforms}
\author{Dmitrii Karp\thanks{Department of Mathematics, Holon Institute of Technology, Holon, Israel. Email: dimkrp@gmail.com}, Anna Vishnyakova\thanks{Department of Mathematics, Holon Institute of Technology, Holon, Israel. Email: annalyticity@gmail.com},  Yi Zhang \thanks{Department of Foundational  Mathematics, School of Science, Xi'an Jiaotong-Liverpool University, 
		Suzhou, 215123, China. 
		Email: Yi.Zhang03@xjtlu.edu.cn}}
\date{}

\begin{document}
	
	\maketitle
	
	\begin{abstract}
		An elementary, but very useful lemma due to Biernacki and Krzy\.{z} (1955) asserts that the ratio of two power series inherits monotonicity from that of the sequence of ratios of their respective coefficients.  Over the last two decades it has been realized that, under some additional assumptions, similar claims hold for more general series ratios as well as for unimodality in place of monotonicity. This paper continues this line of research: we consider ratios of general functional series and integral transforms and furnish natural sufficiency conditions for preservation of unimodality by such ratios. Numerous series and integral transforms appearing in applications satisfy our sufficiency conditions, including Dirichlet, factorial and inverse factorial series, Laplace, Mellin and generalized Stieltjes transforms, among many others.  Finally, we illustrate our general results by exhibiting certain statements on monotonicity patterns for ratios of some special functions.   The key role in our considerations is played by the notion of sign regularity. 
	\end{abstract}

	{\vskip 0.25cm}
	\noindent {\it Keywords}: monotonicity, unimodality, sign-regular kernel, total positivity, quotient of functional series, quotient of integral transforms, hypergeometric ratio, Nuttall Q-function 
	
	{\vskip 0.25cm}
	\noindent {\it 2020 Mathematics Subject Classification }: Primary 26A48, 44A05; Secondary 15B48, 33C20, 33D15, 33E20

	\section{Introduction}
	
	Beginning with the classical lemma due to Biernacki and Krzy\.{z} \cite{BK} there has been a stream of papers studying the monotonicity pattern of the ratio of two power series 
	\begin{equation}\label{eq:powerratio}
		\frac{a(x)}{b(x)}=\frac{\sum\nolimits_{k=0}^{\infty}a_kx^k}{\sum\nolimits_{k=0}^{\infty}b_kx^k}    
	\end{equation}
	as the consequence of the corresponding monotonicity pattern of the sequence $a_k/b_k$.  The original result of  Biernacki and Krzy\.{z} as well as its polynomial version, rediscovered many times, see, for instance, \cite[Lemma~1]{KSJMAA2010}, \cite[Theorem~4.4]{HVV2009}, states that for $b_k>0$, and assuming positive radius of convergence of both series,  the increasing/decreasing character of the sequence $a_k/b_k$ is inherited by the function $a(x)/b(x)$.  This result has been extended in three directions - (1) more complicated monotonicity patterns of $a_k/b_k$; (2) more general functional series, i.e., replacing $x^k$ with some other functional sequence  $\phi_k(x)$; (3) replacing series with an integral (Laplace or more general integral transform).  The earliest result we are aware of in the first direction is given by Belzunce, Ortega and Ruiz in \cite[Lemma~6.4]{BOR2007} in connection with application of probability theory in insurance and finance. These authors stated that unimodality (i.e., one change of monotonicity direction) of the sequence $a_k/b_k$ is also preserved by $a(x)/b(x)$. They did not provide a detailed proof but indicated a method based on the notion of total positivity. Their Lemma~6.4 also assumes infinite radii of convergence of both series in \eqref{eq:powerratio}.  This result was thereafter used in several papers sometimes assuming incorrectly that it is valid in the same form for a finite radius of convergence, see details in \cite[Introduction]{YangChuWangJMAA2015}.  The situation for the power series was clarified in \cite{YangChuWangJMAA2015} by Yang, Chu and Wang, who introduced the function $H_{a,b}(x)=b^2(x)(a(x)/b(x))'/b'(x)$ whose sign at the endpoints of the convergence interval is responsible for distinguishing the cases when $a(x)/b(x)$ is monotonic or unimodal.  More general ratios of functional series of the form
	\begin{equation}\label{eq:generelratio}
		\frac{A(x)}{B(x)}=\frac{\sum\nolimits_{k=0}^{\infty}a_k\varphi_k(x)}{\sum\nolimits_{k=0}^{\infty}b_k\varphi_k(x)}    
	\end{equation}
	have been considered in \cite[Lemma~2.2]{KPMathScan2009}.  Assuming absolute and uniform convergence of both series and their term-wise derivatives on compact subsets of $[0,\infty)$ the authors show that monotonicity of $a_k/b_k$ is inherited by $A(x)/B(x)$ if 
	the sequence of logarithmic derivatives $k\to \varphi_k'(x)/\varphi_k(x)$ is increasing or reversed if $k\to \varphi_k'(x)/\varphi_k(x)$ is decreasing.  If fact, they prove more, namely, that the coefficients of the series expansion of $A'(x)B(x)-A(x)B'(x)$ in the functional sequence $\varphi_k(x)$ are all of the same sign. We also note that the same type of monotonicity rule has been independently established for both power series and more general series by Jameson in \cite{Jameson2021} by using a clever induction argument.

	The result for the ratio \eqref{eq:generelratio}  has been generalized in the recent work \cite{MoaTianAMS2024} to the case when  the sequence $a_k/b_k$ is unimodal. The key conditions for preservation of unimodality given in this work are monotonicity of $x\to{\varphi_k(x)}$ for all $k$ together with monotonicity of $k\to \varphi_k''(x)/\varphi_k'(x)$ for non-negative $x$ in the convergence domain. They also use the function 
	$H_{A,B}(x)=B^2(x)(A(x)/B(x))'/B'(x)$ whose sign at the endpoint of the convergence interval determines whether the ratio $A(x)/B(x)$ is truly unimodal or merely monotonic.  
	
	Finally, in a very recent preprint  \cite{MoaTianArxiv2024} the authors make one further step and consider the ratio \eqref{eq:powerratio}, where the sequence $a_k/b_k$ changes monotonicity twice (for example, it increases, then decreases and then again increases). They present  conditions in terms of the functions $H_{A,B}$ and $H_{A',B'}$  for the ratio $a(x)/b(x)$ to be monotonic or to have one/two changes in monotonicity. 
	
	Note further that similar results for the ratio of the Laplace transforms have been given in \cite{YangTianJMAA2019} and for generic integral transforms in \cite{Qi2022} and \cite{MoaTianAMS2024}.  Independently, in \cite[Lemma~3]{Ferreira-Simon2023} the authors established the monotonicity preservation by the ratios of Laplace and generalized Stieltjes transforms.   In yet another recent work \cite{MaoTianCompRend2023} Mao and Tian give several monotonicity rules for multiple integrals and integrals with variable limits of integration.   In \cite{YangQianChuJIA2017} the authors consider a more general ratio of the form $(P(x)+A(x))/(P(x)+B(x))$, where $A(x)$ and $B(x)$ are power series and in the recent preprint \cite{MaoDuTian2023} $A(x)$ and $B(x)$ are integral transforms on time scales.  Another approach for studying monotonicity patterns of ratios by using derivatives is based on the so-called L'H\^{o}spital's monotonicity rules \cite{Pinelis2001} and can also be used to give yet one more proof of the Biernacki-Krzy\.{z} lemma \cite[Theorem~7.3]{EstradaPavlovic}.  A recent overview of the monotonicity rules of this type including integrals with variable limits of integration and generalizations to time scales can be found in \cite{MaoTianBMMS2024}.

	This seemingly exhaustive treatment of the topic is nevertheless incomplete as some important classes of series (for instance, factorial and inverse factorial series and general Dirichlet series) are excluded by conditions in \cite{MoaTianAMS2024} (at least when applied formally).  Moreover, these authors require both the numerator coefficients $a_k$ and the denominator coefficients $b_k$ in \eqref{eq:generelratio} to be non-negative, while we retain only the requirements on $b_k$ letting the sign of $a_k$ to be arbitrary. We further remark that the underlying structure behind the conditions in \cite{MoaTianAMS2024} is somewhat obscure. The goal of this paper is to present  alternative conditions for unimodality of the ratios of functional series and integral transforms and to reveal that the notion of sign regularity lies at the heart of unimodality preservation property. Furthermore, in some sense our conditions are necessary and sufficient. This is done in the subsequent Section~2.  In Section~3 we give a large list of examples of functional sequences and kernels for which our theory holds. In Section~4 we illustrate some of the results by considering the ratios of generalized hypergeometric and basic hypergeometric functions and ratios of Nuttall's $Q$-functions. Finally, at the end of the paper we included  two appendices containing some of the proofs which would distract the reader should they be included in the main text.  Our approach bears certain resemblance to the developments in \cite{ChoiSmith2017}, where the authors use total positivity to establish conditions for preservation of quasi-concavity by integration with respect to general measures.  In the same paper and references therein one can find applications in probability and economics. Finally, about half-year after the preliminary version of this paper was made available on arXiv, Zakaria Derbazi published a preprint \cite{Derbazi20205} containing generalization of some of the results of this paper to the case of $m$-modular sequences and functions. Note, however, that his definition of unimodality is quite different from ours, so that the results can only be compared after the appropriate adaptation.  
	
\section{Main results}
	
The key role in our considerations will be played by the notion of sign regularity defined as follows \cite[Definition~1.3, Chapter 2, \S1]{KarlinBook}. 

\begin{definition}
Let	$X,Y$ be linearly ordered sets, in this paper - some subsets of $\mathbb{R}$.
A function (kernel) $K: X\times{Y}\to\mathbb{R}$ is called sign regular of order $r\in\mathbb{N}$ ($K\in{SR_r}$) if there is a sequence of signs $\varepsilon_1,\varepsilon_2,\ldots,\varepsilon_r$, each $+1$ or $-1$ such that
\begin{equation}\label{eq:SRkernel}
\varepsilon_m\left|\!\!
\begin{array}{cccc}
	K(x_1,y_1) & K(x_1,y_2) & \ldots & K(x_1,y_m) \\
	K(x_2,y_1) & K(x_2,y_2) & \ldots & K(x_2,y_m) \\
	\vdots&\vdots&\vdots&\ddots \\
	K(x_m,y_1) & K(x_m,y_2) & \ldots & K(x_m,y_m) \\
\end{array}
\!\!\right|\ge0
\end{equation}
for all $m=1,\ldots,r$ and any points $x_1<x_2<\cdots<x_m$,  $y_1<y_2<\cdots<y_m$, $x_i\in{X}$, $y_j\in{Y}$.  If strict inequality holds in the above inequality, the kernel $K$ is called \textit{strictly sign regular} of order $r$, $K\in SSR_r$.  If $\varepsilon_1=\cdots=\varepsilon_r=1$, the kernel is \textit{totally positive}  of order $r$, $K\in{TP_r}$ or \textit{strictly totally positive} ($K\in{STP_r}$) if all determinants above are strictly positive. We will write $r=\infty$ in any of the above notions, if the corresponding property holds for all natural $r$. 
\end{definition}

\textbf{Remark}. Denote by $I$ a convex subset of $\R$ 
containing more than one point (open or closed interval or 
semi-interval, open or closed half-axis or the whole axis). Suppose the set $X$ in the above definition is equal to $\N_0$ and the set $Y=I$, so that the kernel $K(x,y)$ represents a sequence of functions $\varphi_n(y)=K(n,y)$, $n\in\N_0$, $y\in I$. Assume further that each function $\varphi_n(y): I\to\R$ is $(r-1)-$times continuously differentiable, then a sufficient condition for $K(n,y)=\varphi_n(y)\in SSR_r$ is the following: there is sequence of signs $\varepsilon_1,\ldots,\varepsilon_r$ such that for  all $m, 1\leq m \leq r$ and all sequences of indices
$0 \leq n_1 < n_2 < \ldots < n_m$  the Wronskians 
satisfy $\varepsilon_{m} W(\varphi_{n_1}, \varphi_{n_2}, \ldots, \varphi_{n_m})(x)>0$ 
for all $x\in I$.  Sequences of functions satisfying these requirements are known as Descartes' sequences or sequences obeying the Descartes' rule of signs \cite[Chapter V, Problems 87, 90]{PS}. See details below.

Perhaps the most important property of a sign regular kernel  is its variation diminishing property.  Denote by $S^{-}(\lambda_n)_{n=0}^\infty$  the number of sign changes of a real sequence $(\lambda_n)_{n=0}^\infty$ ignoring zeros; for a continuous function $f(x)$ on an interval $I$ define 
$$
S^{-}(f(x))_{x\in I}=\sup S^{-}(f(x_k))_{k=1}^{n},
$$
where supremum is taken over all finite ordered sequences of points in $I$: $x_1<x_2<\cdots<x_n$.

Our main tools are the following theorems, see  \cite[Chapter V, \S3, Theorem~3.1]{KarlinBook} and  \cite[Chapter V, \S1, Theorem~1.5]{KarlinBook} 

\begin{theorem}\label{th:series}
Suppose for a sequence of real continuous functions  $(\varphi_n)_{n=0}^\infty$ defined on a convex subset  $I$ of $\R$ containing more than one point,  the kernel $K(n,x)=\varphi_n(x)\in SR_r$ on $\mathbb{N}_0\times I$ for some $r\ge2$.
Then for every real sequence $(c_n)_{n=0}^\infty$ having no more than $r-1$
sign changes and such that  the series   
$$
f(x)=\sum_{n=0}^\infty c_n \varphi_n(x) 
$$
converges uniformly on all compact subsets of $I$, we have 
$$
S^{-}(f(x))_{x\in I}\le S^{-}(c_n)_{n=0}^\infty.
$$
Moreover, if $S^{-}(f(x))_{x\in I}=S^{-}(c_n)_{n=0}^\infty=k\le{r-1}$, then the 
sign patterns of $f(x)$ and $(c_n)_{n=0}^\infty$  coincide if $\varepsilon_{k}\varepsilon_{k+1}=1$ or are opposite to each other if $\varepsilon_{k}\varepsilon_{k+1}=-1$. Here $\varepsilon_{0}=1$ and $\varepsilon_{m}\det(\phi_{n_i}(x_j))_{i,j=1}^{m}\ge0$ for any finite sequences $n_1<n_2<\cdots<n_m$, $x_1<\cdots<x_m$ with $m\le{r}$.
\end{theorem}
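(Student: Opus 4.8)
The plan is to follow the determinant--expansion proof of the variation-diminishing property of sign regular kernels \cite{KarlinBook}, adapted to the present discretized, infinite-series setting. \emph{First, reduce to finite sums.} I would fix an arbitrary finite set $x_1<\cdots<x_N$ in $I$ at which $f$ realizes $s:=S^{-}(f(x_k))_{k=1}^{N}$ sign changes with all $f(x_k)\neq0$; it suffices to show $s\le S^{-}((c_n))$. Since $\sum_n c_n\varphi_n$ converges uniformly on the finite (hence compact) point set, truncating at a large index $M$ moves each $f(x_k)$ by less than $\min_k|f(x_k)|$, so $f_M=\sum_{n=0}^{M}c_n\varphi_n$ still alternates in sign over a subconfiguration of $s+1$ of these points; moreover $K$ restricted to $\{0,\dots,M\}\times I$ is still $SR_r$ and $S^{-}((c_n)_{n=0}^{M})\le S^{-}((c_n)_{n=0}^{\infty})$, and a further thinning of the configuration lets me assume $s\le r$. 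So it is enough to derive a contradiction from the situation: a finite sum $f=\sum_{n=0}^{M}c_n\varphi_n$ is strictly alternating at $x_0<\cdots<x_s$, with $s\le r$ and $S^{-}((c_n)_{n=0}^{M})\le s-1$.

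\emph{Next, exploit the sign-block structure of the coefficients.} As $(c_n)_0^M$ has at most $s-1$ sign changes, I would partition $\{0,\dots,M\}$ into (at most $s$, hence after padding exactly $s$) consecutive blocks $B_0<B_1<\cdots<B_{s-1}$ on which $c_n$ keeps a constant sign $\delta_l$, each $B_l$ chosen to contain an index $n$ with $c_n\neq0$ (if fewer than $s$ of the $c_n$ are nonzero one works instead with that smaller sub-family of the $\varphi_n$, which only simplifies matters). Setting $g_l=\sum_{n\in B_l}|c_n|\varphi_n$ gives $f=\sum_{l=0}^{s-1}\delta_l g_l$. The $(s+1)\times(s+1)$ matrix whose columns are $(f(x_j))_{j=0}^{s}$ and $(g_l(x_j))_{j=0}^{s}$, $0\le l\le s-1$, has linearly dependent columns, so its determinant is $0$; expanding along the first column,
\[
0=\sum_{j=0}^{s}(-1)^{j}f(x_j)\,D_j,\qquad D_j=\det\bigl(g_l(x_i)\bigr)_{i\neq j,\ 0\le l\le s-1}.
\]
Applying the Cauchy--Binet (basic composition) formula to each $D_j$ and using that the blocks $B_l$ are pairwise disjoint and increasingly ordered -- so that the accompanying coefficient minors are diagonal with nonnegative entries and only the staircase terms $n_l\in B_l$ survive -- I get
\[
D_j=\sum_{\substack{n_l\in B_l\ (0\le l\le s-1)\\ n_0<n_1<\cdots<n_{s-1}}}\Bigl(\prod_{l}|c_{n_l}|\Bigr)\det\bigl(\varphi_{n_l}(x_i)\bigr)_{i\neq j,\ 0\le l\le s-1}.
\]
Each determinant on the right is an order-$s$ minor of the kernel $K$ over $s$ points and $s\le r$, so $\varepsilon_s\det(\varphi_{n_l}(x_i))_{i\neq j}\ge0$ and hence $\varepsilon_s D_j\ge0$ for all $j$. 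Multiplying the preceding display by $\varepsilon_s$ times the common sign of $(-1)^jf(x_j)$ turns it into a sum of nonnegative quantities equal to $0$, so $\varepsilon_sD_j=0$ for all $j$, and since every summand in the Cauchy--Binet expansion is nonnegative after the same sign adjustment, each such summand vanishes; picking $n_l\in B_l$ with $\prod_l|c_{n_l}|>0$, all order-$s$ minors of the $(s+1)\times s$ array $(\varphi_{n_l}(x_i))_{0\le i\le s,\ 0\le l\le s-1}$ are zero.

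\emph{Closing the argument.} The hard part will be converting this rank deficiency into an actual contradiction. If $K$ is \emph{strictly} sign regular the selected minors are strictly of sign $\varepsilon_s$, hence nonzero, and we are done immediately; for a merely $SR_r$ kernel I would instead appeal to the finite-dimensional variation-diminishing statement for matrices all of whose maximal minors share one sign -- the matrix form of the results quoted from \cite{KarlinBook} -- which yields $S^{-}((f(x_j))_{j=0}^{s})\le S^{-}((c_n)_{n=0}^{M})\le s-1$, contradicting the strict alternation at the $s+1$ points $x_0,\dots,x_s$; a clean alternative is to approximate $K$ on the finite point set by an $SSR_r$ kernel and pass to the limit. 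Either way $s\le S^{-}((c_n))$, which proves $S^{-}(f)_{x\in I}\le S^{-}((c_n))$. For the ``moreover'' part, assuming $S^{-}(f)_{x\in I}=S^{-}((c_n))=k\le r-1$, I would rerun the computation at points $x_0<\cdots<x_k$ realizing $k$ sign changes of $f$, now with exactly $k+1$ strictly alternating coefficient blocks and expanding $f(x_j)=\sum_l\delta_l g_l(x_j)$ directly instead of a vanishing determinant; the staircase Cauchy--Binet reduction then expresses the sign of $f(x_j)$ through the order-$(k+1)$ minors of $K$ (sign $\varepsilon_{k+1}$) and the order-$k$ minors obtained by deleting the $j$-th row (sign $\varepsilon_k$) together with the block sign $\delta_j$, giving $\operatorname{sign}f(x_j)=\varepsilon_k\varepsilon_{k+1}\delta_j$ up to a $j$-independent factor (the convention $\varepsilon_0=1$ covering the degenerate case $k=0$), so the sign pattern of $f$ coincides with that of $(c_n)$ when $\varepsilon_k\varepsilon_{k+1}=1$ and is its reverse when $\varepsilon_k\varepsilon_{k+1}=-1$.
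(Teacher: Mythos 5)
The paper does not actually prove Theorem~\ref{th:series}: it is imported from Karlin's \emph{Total Positivity} (Chapter V, Theorems 1.5 and 3.1), with only a remark on how to extend the sign-pattern clause from totally positive to general sign-regular kernels. Your proposal is a faithful reconstruction of exactly that classical argument --- truncation to a finite matrix via uniform convergence, block decomposition of the coefficients, Laplace expansion of the singular $(s+1)\times(s+1)$ determinant, and the staircase Cauchy--Binet reduction --- and it is essentially sound, including the Cramer-type computation sketched for the ``moreover'' clause. The one soft spot is how you close the reductio for merely $SR_r$ (non-strict) kernels: your primary fallback is to cite the matrix form of the variation-diminishing theorem, which renders the preceding determinant expansion redundant, while your alternative (approximating an $SR_r$ matrix by $SSR_r$ ones on the finite configuration) quietly relies on a nontrivial density result from Karlin's Chapters 2--3; either route is legitimate, but it should be made an explicit citation rather than left implicit.
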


\begin{theorem}\label{th:integral}
	Suppose the kernel $K(x,y):I\times{J}\to\R$ is $SR_r$ for some $r\ge2$, where $I,J$ are convex subsets of $\R$ containing more than one point. Then for any continuous function $g:J\to\R$ having no more than $r-1$
	sign changes and any positive weight $w:J\to(0,\infty)$ such that the integral    
	$$
	f(x)=\int\limits_{J}K(x,y)g(y)w(y)dy 
	$$
	converges uniformly on all compact subsets of $I$, we have 
	$$
	S^{-}(f(x))_{x\in I}\le S^{-}(g(y))_{y\in J}.
	$$
	Moreover, if $	S^{-}(f(x))_{x\in I}=S^{-}(g(y))_{y\in J}=k\le{r-1}$, then the 
	sign patterns of $f(x)$ and $g(y)$  coincide if $\varepsilon_{k}\varepsilon_{k+1}=1$ or are opposite to each other if $\varepsilon_{k}\varepsilon_{k+1}=-1$. Here $\varepsilon_{0}=1$ and $\varepsilon_{m}\det(K(x_i,y_j))_{i,j=1}^{m}\ge0$ for any finite sequences $y_1<y_2<\cdots<y_m$, $x_1<\cdots<x_m$ with $m\le{r}$.
\end{theorem}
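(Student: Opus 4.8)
The plan is to argue by contradiction and to transfer the whole question to a single finite sign‑regular \emph{matrix}, on which the classical (finite) variation‑diminishing property can be invoked. Write $d\mu(y)=w(y)\,dy$, a fixed positive measure on $J$; the weight will enter only through the harmless fact that multiplying a function by a positive factor changes neither its sign changes nor its sign pattern. Suppose $S^{-}(f)_{x\in I}\ge S^{-}(g)_{y\in J}+1$. Since $S^{-}(g)\le r-1$, we may fix an integer $p$ with $S^{-}(g)+1\le p\le r$ and points $x_0<x_1<\cdots<x_p$ in $I$ at which $f$ takes strictly alternating (in particular nonzero) values, so that the vector $(f(x_0),\ldots,f(x_p))$ has exactly $p$ sign changes. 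On the other hand, an elementary lemma on real functions shows that $S^{-}(g)\le r-1$ permits a splitting of $J$ into consecutive subintervals $J_1<J_2<\cdots<J_q$ with $q\le S^{-}(g)+1\le p$, on each of which $g$ keeps a constant sign $\eta_j\in\{-1,+1\}$ with $\eta_j\eta_{j+1}=-1$ (subintervals where $g$ vanishes identically are absorbed into their neighbours). In particular $q\le p\le r$.

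Now form the $(p+1)\times q$ matrix $M=(m_{ij})$ with $m_{ij}=\int_{J_j}K(x_i,y)\,|g(y)|\,d\mu(y)$, so that $f(x_i)=\sum_{j=1}^{q}\eta_j m_{ij}$, i.e. $(f(x_0),\ldots,f(x_p))=M(\eta_1,\ldots,\eta_q)$. The claim is that $M$ is sign regular of order $q$ with the same sign string $\varepsilon_1,\ldots,\varepsilon_q$ as $K$. Indeed, for $m\le q$ and any ordered rows $i_1<\cdots<i_m$ and columns $j_1<\cdots<j_m$, the basic composition formula (Cauchy--Binet for integrals) expresses the minor $\det(m_{i_a,j_b})_{a,b=1}^{m}$ as the integral of $\det\bigl(K(x_{i_a},y_b)\bigr)_{a,b=1}^{m}$ over the box $J_{j_1}\times\cdots\times J_{j_m}$ against the positive density $\prod_{b}|g(y_b)|$; since the $J_j$ are increasing, every point of that box has $y_1<\cdots<y_m$, and $m\le r$, so $\varepsilon_m\det(K(x_{i_a},y_b))\ge0$ because $K\in SR_r$, whence $\varepsilon_m\det(m_{i_a,j_b})\ge0$. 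The classical variation‑diminishing property of sign‑regular matrices then gives $S^{-}(M\mathbf c)\le S^{-}(\mathbf c)$ for every vector $\mathbf c$; applying it to $\mathbf c=(\eta_1,\ldots,\eta_q)$, which has exactly $q-1$ sign changes, yields $p=S^{-}(f(x_0),\ldots,f(x_p))\le q-1\le S^{-}(g)<p$, a contradiction. Hence $S^{-}(f)_{x\in I}\le S^{-}(g)_{y\in J}$.

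For the refinement, suppose $S^{-}(f)_{x\in I}=S^{-}(g)_{y\in J}=k\le r-1$. Choose the splitting above to be minimal, so $q=k+1$, and choose $x_0<\cdots<x_k$ realizing the $k$ sign changes of $f$; then the sign pattern of $g$ is $(\eta_1,\ldots,\eta_{k+1})$ and that of $f$ is the sign pattern of $(f(x_0),\ldots,f(x_k))$. Since $M$ is $SR_{k+1}$ with sign string $\varepsilon_1,\ldots,\varepsilon_{k+1}$ and $S^{-}(M(\eta_1,\ldots,\eta_{k+1}))=S^{-}(\eta_1,\ldots,\eta_{k+1})=k$, the equality clause of the matrix variation‑diminishing property (the finite‑dimensional analogue of the corresponding clause in Theorem~\ref{th:series}, with $\varepsilon_0:=1$) forces the sign pattern of $(f(x_0),\ldots,f(x_k))$ to coincide with $(\eta_1,\ldots,\eta_{k+1})$ when $\varepsilon_k\varepsilon_{k+1}=1$ and to be its reverse when $\varepsilon_k\varepsilon_{k+1}=-1$; translated back, this is exactly the asserted relation between the sign patterns of $f$ and $g$.

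An alternative route is to deduce the theorem directly from Theorem~\ref{th:series}: approximate $f$ by finite sums $f_N(x)=\sum_n K(x,y_n)g(y_n)w(y_n)\,\Delta_n$ over ever finer partitions $\{y_n\}$ of $J$; the sequence $n\mapsto K(x,y_n)$ inherits $SR_r$ with the same $\varepsilon$'s (its determinantal minors are, up to transposition, minors of $K$), and the coefficient sequence $n\mapsto g(y_n)w(y_n)\Delta_n$ has at most $S^{-}(g)$ sign changes because $w>0$ and $\Delta_n>0$, so Theorem~\ref{th:series} bounds $S^{-}(f_N)$ and, when equality holds, fixes its sign pattern; one then lets the mesh tend to $0$. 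In either route the genuinely delicate parts are technical bookkeeping rather than conceptual: in the first, the precise statement and proof of the elementary partition lemma (care is needed where $g$ vanishes on a set of positive measure, or near $\partial J$) together with the applicability of the composition formula and the tracking of the string $\varepsilon_1,\ldots,\varepsilon_q$ through it; in the second, justifying $f_N\to f$ uniformly on compact subsets of $I$ from the stated hypotheses (which in practice forces one to assume continuity of $K$ in the $y$‑variable) and checking that $S^{-}(f_N)$ and the coefficient sign‑change count both stabilize to $k$, so that the equality clause of Theorem~\ref{th:series} survives the passage to the limit. I expect this last point to be the main obstacle along the second route; the first route is preferable precisely because it involves no limiting argument at all.
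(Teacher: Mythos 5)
Your argument is correct, and it is essentially the proof the paper is pointing to: the paper does not prove this theorem itself but quotes it from Karlin's \emph{Total Positivity} (Ch.~V, Theorems 1.5 and 3.1), whose proof is exactly your reduction — partition $J$ into at most $S^{-}(g)+1$ consecutive intervals of constant sign, pass to the finite matrix $m_{ij}=\int_{J_j}K(x_i,y)|g(y)|w(y)\,dy$, verify its sign regularity with the same $\varepsilon$'s via the basic composition formula, and invoke the matrix variation-diminishing theorem together with its equality clause for the sign pattern. The technical points you flag (the partition lemma and the absolute convergence needed to apply the composition formula) are genuine but standard and are covered by Karlin's integrability hypotheses.
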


\textbf{Remark}. The most general form of the variation diminishing property can be found in \cite[Chapter V, Theorem~3.1]{KarlinBook} with both series and the integral above  replaced by integration with respect to a general positive measure.  The above two theorems are its special cases.   However, to avoid unnecessary complications related to the subtleties of the notions of ``relevant sign changes'' and ``nodal zeros'', we prefer to present the above two cases separately thus restricting our attention to sequences and continuous functions which suffices for our purposes.     

\textbf{Remark}. The ultimate parts of the above theorems concerning the preservation or reversal of the sign patterns is only stated in  \cite[Chapter V, Theorem~3.1]{KarlinBook} for totally positive kernels.  However, for the case of matrices the required formulation is contained in \cite[Chapter V, \S1, Theorem~1.5]{KarlinBook} 
 (beware of a typo in indexing $\varepsilon$) and can be extended to general kernels repeating {\it mutatis mutandis} the proof of  \cite[Chapter V, Theorem~3.1]{KarlinBook}.

Let us fix some terminology.  We will say that a continuous function $g$ defined on a convex real interval $I$ containing more than one point is increasing (decreasing) if $g(x_1)\le g(x_2)$ ($g(x_1)\ge g(x_2)$) for all distinct $x_1<x_2$, $x_1,x_2\in  I$.  This implies that $g$ is either constant or  $g(a+)<g(b-)$ ($g(a+)>g(b-)$), where $a=\inf(I)$, $b=\sup(I)$. Increasing or decreasing function will be called monotonic on $I$.  We will further say that $g$ is unimodal if it  changes monotonicity no more than once. Hence, there are four possibilities: $g$ is increasing, $g$ is decreasing (or both increasing and decreasing if it is constant), $g$ is first increasing, then decreasing, or $g$ is  first decreasing then increasing. If $g$ is unimodal and not monotonic we say that it is strictly unimodal.   Similar definition holds for sequences defined on arbitrary subsets of $N_0$.  To cut it short, a function (or a sequence) will be called unimodal if it has no more than one local extremum at interior points of $I$.  

The application of the above theorems to the ratios of functional series and integral transforms is based on the  following key observation.

\begin{lemma}\label{lm:keylemma} 
	Let $g : I \to \mathbb{R}$ be a given continuous function.
	The function $g$ is unimodal if and only if for every $\lambda \in \mathbb{R}$
	the function $\mu\to f_{\lambda}(\mu)=g(\mu) -\lambda$ has no more than $2$ sign
	changes on $I$, and its sign pattern remains constant for all $\lambda\in\Lambda_2$, where $\Lambda_2$ is the subset of $\mathbb{R}$ such that $\mu\to f_{\lambda}(\mu)$ has exactly 
	$2$ sign changes once $\lambda\in\Lambda_2$.  In a similar fashion,  a real sequence $(d_k)_{k=0}^\infty$ is a unimodal 
	if and only if for every $\lambda \in \mathbb{R}$ the sequence $(d_k - \lambda)_{k=0}^\infty$ 
	has no more than $2$ sign changes 
	and its sign pattern remains constant for all $\lambda\in\Lambda_2$, where $\Lambda_2$ is the subset of $\mathbb{R}$ such that $(d_k - \lambda)_{k=0}^\infty$  has exactly 
	$2$ sign changes once $\lambda\in\Lambda_2$. 
\end{lemma}

\begin{proof}
	We will prove the statement about unimodal functions;   the case of sequences can be handled analogously.
	
	Suppose that $g$ is a unimodal continuous function. Then $g$ has
at most  $2$ intervals of monotonicity. If $g$ is  monotonic  
	on $I,$ then for  all $\lambda \in \mathbb{R}$ the function  
	$f_{\lambda}(\mu)=g(\mu) -\lambda $  has no more than $1$ change of sign
on $I$. Suppose now that $g$ is not monotonic, so that it has exactly $2$ intervals of monotonicity on $I.$ 
	Without loss of generality, we will assume that there exists $\mu_0 \in I$  
	such that $g$ is  increasing and non-constant on the set $I\cap (-\infty, \mu_0],$ 
	and $g$  is  decreasing and non-constant on the set  $ I \cap [\mu_0, + \infty).$ 
	We denote by  $[c,  g(\mu_0)] = \overline{g(I \cap (-\infty, \mu_0])}$, and by 
	$[d, g(\mu_0)] = \overline{g(I\cap [\mu_0, +\infty))}$, where $-\infty \leq c < g(\mu_0), 
	\quad  -\infty \leq d < g(\mu_0) $ (here by $\overline{X}$ we denote the closure of the 
	set $X$). If  $\lambda \leq \min(c, d),$ or $\lambda \geq g(\mu_0),$  then  the function  
	$\mu\to f_{\lambda}(\mu)$ has no sign changes on $I$.  If $\lambda \in (\min(c, d),  \max(c, d)]$,  
	then  the function  $\mu\to f_{\lambda}(\mu)$ has one sign change on $I$. If $\lambda \in(\max(c,d), 
	g(\mu_0))$, then the function  $\mu\to f_{\lambda}(\mu)$ has two sign changes on $I$, so that $\Lambda_2=(\max(c, d)), 
	g(\mu_0))$. For every 
	$\lambda \in \Lambda_2$, the sign pattern of the function  $\mu\to f_{\lambda}(\mu)$ is  $(-, +, -)$ and hence remains constant.

	Suppose now that for every $\lambda \in \mathbb{R}$  the function  $\mu\to f_{\lambda}(\mu)$ 
	has no more than $2$ sign changes on $I$, and its sign pattern remains constant for all $\lambda\in\Lambda_2$.
	By the way of contradiction assume that $g(\mu)$ is not  
	unimodal. Then $g$ has at least $3$ intervals of  monotonicity. 
	Without loss of generality, we can assume that there exist real numbers
	$\alpha, \beta, \gamma, \delta \in I$ with $\alpha < \beta < \gamma < \delta$  such that
\begin{align*}
	&g~\text{is increasing on}~[\alpha, \beta]~\text{and}~g(\alpha) < g(\beta), \\
	&g\text{ is decreasing on }[\beta, \gamma]~\text{and}~g(\beta) > g(\gamma),\\
	&g\text{ is increasing on }[\gamma,  \delta]~\text{and}~g(\gamma) < g(\delta).
\end{align*}
 If $g(\delta) \geq g(\beta)$  then for 
	every $\lambda \in (g(\alpha), g(\beta))$ the function  $f_{\lambda}$ has at least 
	$3$ sign changes on $I$ by Rolle's theorem applied on each subinterval.  This contradicts our hypotheses. Thus, $g(\delta) < g(\beta)$.
	Similar argument yields $g(\alpha) > g(\gamma)$.  For $\lambda \in (g(\alpha), g(\beta))$ the function  
	$\mu\to f_{\lambda}(\mu)$ has $2$ sign changes with the pattern  $(-, +, -)$ on the interval $(\alpha, \gamma)$ implying that it has $2$ sign changes on the whole $I$  by
	our assumption and $(\alpha, \gamma)\subset\Lambda_2$.    For
	$\lambda \in (g(\gamma),  g(\delta))$ the function  $f_{\lambda}$  has $2$ sign
	changes on the interval $(\beta, \delta)$ so that it has $2$ sign changes on the whole $I$ and $(\beta, \delta)\subset \Lambda_2$.  For such 
	values of $\lambda$ the sign pattern of the function $\mu\to f_{\lambda}(\mu)$ is
	$(+, -, +)$. Hence, for different values of $\lambda\in\Lambda_2$ we obtained different sign patterns.  This contradicts the hypothesis of a constant sign pattern for all $\lambda\in\Lambda_2$. So, our assumption that $g$ is not unimodal must be  wrong.
	
\end{proof}

Our main results are the following two theorems.

\begin{theorem}\label{th:seriesratio}
Suppose for a sequence of real continuous functions  $(\varphi_n)_{n=0}^\infty$ defined on a convex interval $I\subseteq\R$,  the kernel $K(n,x)=\varphi_n(x)\in SR_3$ on $\mathbb{N}_0\times I$.  Suppose both series in the following ratio converge uniformly on compact subsets of $I$:
$$
F(x)=\frac{\sum_{k=0}^\infty a_k \varphi_k(x)}{\sum_{k=0}^\infty b_k\varphi_k(x)},
$$
where $a_k\in\R$, $b_k>0$ for all $k\in\N_0$ and the denominator does not vanish for all $x\in I$. If the sequence of quotients $\left\{a_k/b_k\right\}_{k=0}^\infty$ is a unimodal
sequence, then the function $F$ is unimodal.  Moreover, if $F$ is strictly unimodal, then it inherits the monotonicity pattern of  the sequence $\left\{a_k/b_k\right\}_{k=0}^\infty$ if $\varepsilon_2\varepsilon_3>0$ or reverses it if $\varepsilon_2\varepsilon_3<0$, where $\varepsilon_2,\varepsilon_3$ are defined in \eqref{eq:SRkernel}. 
\end{theorem}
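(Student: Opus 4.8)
The strategy is to derive both assertions from Lemma~\ref{lm:keylemma} applied to the function $g=F$, using Theorem~\ref{th:series} as the mechanism that transports a bound on the number of sign changes from a coefficient sequence to the corresponding functional series.

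I would begin with a reduction that removes the denominator from the picture. Since $K(n,x)=\varphi_n(x)\in SR_3$, the case $m=1$ of \eqref{eq:SRkernel} gives $\varepsilon_1\varphi_n(x)\ge 0$ for all $n\in\N_0$ and $x\in I$, whence $\varepsilon_1 B(x)=\sum_{k}b_k\big(\varepsilon_1\varphi_k(x)\big)\ge 0$ for the denominator $B(x)=\sum_k b_k\varphi_k(x)$, because $b_k>0$. As $B$ does not vanish on $I$, it follows that $B$ has the constant sign $\varepsilon_1$ throughout $I$. Hence, for an arbitrary $\lambda\in\R$,
$$
F(x)-\lambda=\frac{N_\lambda(x)}{B(x)},\qquad N_\lambda(x):=\sum_{k=0}^\infty(a_k-\lambda b_k)\varphi_k(x),
$$
where the series $N_\lambda$ converges uniformly on compact subsets of $I$ as a linear combination of the two convergent series in the hypothesis. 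In particular $\operatorname{sign}\big(F(x)-\lambda\big)=\varepsilon_1\operatorname{sign}\big(N_\lambda(x)\big)$, so that $S^-\big(F(x)-\lambda\big)_{x\in I}=S^-\big(N_\lambda(x)\big)_{x\in I}$, and the sign patterns of $F-\lambda$ and of $N_\lambda$ are either everywhere equal or everywhere opposite.

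Next I would check the two conditions of Lemma~\ref{lm:keylemma}. The coefficient sequence of $N_\lambda$ is $c_k=a_k-\lambda b_k=b_k\big(a_k/b_k-\lambda\big)$; since $b_k>0$, the sequences $(c_k)$ and $(a_k/b_k-\lambda)$ have the same number of sign changes and the same sign pattern. Because $(a_k/b_k)$ is unimodal, the sequence part of Lemma~\ref{lm:keylemma} yields $S^-(a_k/b_k-\lambda)\le 2$ for every $\lambda$, and a sign pattern $P\in\{(-,+,-),(+,-,+)\}$ that is one and the same for every $\lambda$ producing exactly two sign changes. Applying Theorem~\ref{th:series} with $r=3$ to $N_\lambda$, whose coefficient sequence has at most $r-1=2$ sign changes, gives $S^-\big(N_\lambda(x)\big)_{x\in I}\le S^-(c_k)\le 2$, hence $S^-\big(F(x)-\lambda\big)_{x\in I}\le 2$; this is the first requirement of Lemma~\ref{lm:keylemma}. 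For the second requirement, suppose $S^-\big(F(x)-\lambda\big)_{x\in I}=2$ for some $\lambda$. Then $S^-\big(N_\lambda(x)\big)_{x\in I}=2$, and the chain $2=S^-(N_\lambda)\le S^-(c_k)\le 2$ forces $S^-(c_k)=2$. The ``moreover'' clause of Theorem~\ref{th:series} (case $k=2$) then says that the sign pattern of $N_\lambda$ equals that of $(c_k)$, namely $P$, when $\varepsilon_2\varepsilon_3=1$ and equals its reverse when $\varepsilon_2\varepsilon_3=-1$; either way it is a fixed pattern, and after multiplication by the constant sign $\varepsilon_1$ of $B$, so is the sign pattern of $F-\lambda$. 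By Lemma~\ref{lm:keylemma}, $F$ is unimodal.

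For the final assertion, if $F$ is not monotonic then it is genuinely unimodal, so there do exist values of $\lambda$ with $S^-(F-\lambda)_{x\in I}=2$; for such $\lambda$ the chain of sign-pattern identities above (together with $\operatorname{sign}B\equiv\varepsilon_1$) shows that the monotonicity pattern of $F$ coincides with that of $(a_k/b_k)$ exactly when $\varepsilon_2\varepsilon_3>0$ and is reversed when $\varepsilon_2\varepsilon_3<0$. The step I expect to require the most care is the passage $S^-(F-\lambda)=S^-(N_\lambda)$, which hinges on the constant sign of the denominator coming from the inclusion $SR_3\subseteq SR_1$, together with the observation — needed before the sign-pattern clause of Theorem~\ref{th:series} can be invoked — that $S^-(N_\lambda)=2$ in fact forces $S^-(c_k)=2$ and not merely $S^-(c_k)\ge 2$; the remainder is bookkeeping with the signs $\varepsilon_1,\varepsilon_2,\varepsilon_3$.
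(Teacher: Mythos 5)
Your proposal is correct and follows essentially the same route as the paper: decompose $F(x)-\lambda$ as a series with coefficients $c_k=b_k(a_k/b_k-\lambda)$, apply the variation-diminishing Theorem~\ref{th:series} with $r=3$, and conclude via Lemma~\ref{lm:keylemma}. Your explicit tracking of the constant sign $\varepsilon_1$ of the denominator (where the paper instead normalizes $\varphi_k\ge0$ "without loss of generality") and your observation that $S^{-}(N_\lambda)=2$ forces $S^{-}(c_k)=2$ before the sign-pattern clause can be invoked are small refinements of the same argument, not a different approach.
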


A companion theorem for integral transforms takes the form
\begin{theorem}\label{th:integralratio}
	Suppose  $I,J\subseteq\R$ are convex intervals.   
	Let $K(x,t)$ be a $SR_3$ kernel on $I\times{J}\to\R$ and $w:J\to(0,\infty)$ be a positive weight.  Suppose both integrals in the following definition converge uniformly on compact subsets of $I$:
$$
F(x)=\frac{\int_{J}\!K(x,t)A(t)w(t)dt}{\int_{J}\!K(x,t)B(t)w(t)dt},
$$
	where $A:J\to\R$ and $B:J\to(0,\infty)$. If the function $t\to A(t)/B(t)$ is unimodal, then the function $x\to F(x)$ is unimodal.  Moreover, if $F$ is strictly unimodal, then it inherits the monotonicity pattern of $t\to A(t)/B(t)$ if $\varepsilon_2\varepsilon_3>0$ or reverses it if $\varepsilon_2\varepsilon_3<0$, where $\varepsilon_2,\varepsilon_3$ are defined in \eqref{eq:SRkernel}. 
\end{theorem}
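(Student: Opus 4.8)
The plan is to reduce the unimodality of $F$ to the sign‑change characterisation furnished by Lemma~\ref{lm:keylemma} and then to invoke the variation‑diminishing property of Theorem~\ref{th:integral}. The crucial algebraic observation is that for every $\lambda\in\R$
\[
F(x)-\lambda=\frac{\int_{J}K(x,t)\bigl(A(t)-\lambda B(t)\bigr)w(t)\,dt}{\int_{J}K(x,t)B(t)w(t)\,dt}=:\frac{N_\lambda(x)}{D(x)},
\]
so that $F-\lambda$ is again a ratio whose numerator $N_\lambda$ is the integral transform of $A-\lambda B$ against the same kernel $K$ and weight $w$, while the denominator $D$ does not depend on $\lambda$. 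By Lemma~\ref{lm:keylemma}, it suffices to show that $F-\lambda$ has at most two sign changes on $I$ for every $\lambda$, and a $\lambda$‑independent sign pattern whenever it has exactly two.

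First I would record that $D$ has a constant sign on $I$: since $K\in SR_3$ we have $\varepsilon_1K(x,t)\ge0$, and $B,w>0$, so $D$ carries the sign $\varepsilon_1$ everywhere on $I$ (it is nonvanishing because $F$ is well defined); replacing $K$ by $-K$ if necessary, which leaves $F$ unchanged, we may normalise $\varepsilon_1=1$, i.e. $D>0$, and then $F-\lambda$ has exactly the sign changes and the sign pattern of $N_\lambda$. Next, since $B>0$ we have $A(t)-\lambda B(t)=B(t)\bigl(A(t)/B(t)-\lambda\bigr)$, so $t\mapsto A(t)-\lambda B(t)$ inherits the sign changes and the sign pattern of $t\mapsto A(t)/B(t)-\lambda$; applying Lemma~\ref{lm:keylemma} to the unimodal continuous function $t\mapsto A(t)/B(t)$, for every $\lambda$ the function $A-\lambda B$ has at most $2=r-1$ sign changes on $J$, with a $\lambda$‑independent sign pattern when it has exactly two. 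Theorem~\ref{th:integral} with $r=3$, $g=A-\lambda B$ and weight $w$ (the integral $N_\lambda$ converging uniformly on compacta, being a linear combination of the two given ones) now gives $S^{-}(N_\lambda(x))_{x\in I}\le S^{-}(A(t)-\lambda B(t))_{t\in J}\le 2$, hence $F-\lambda$ has at most two sign changes for every $\lambda$. If for some $\lambda$ it has exactly two, then $S^{-}(N_\lambda)_{x\in I}=2$ forces equality $S^{-}(A-\lambda B)_{t\in J}=2$, and the ``moreover'' part of Theorem~\ref{th:integral} with $k=2$ says the sign pattern of $N_\lambda$ coincides with that of $A-\lambda B$ if $\varepsilon_2\varepsilon_3>0$ and is the reverse of it if $\varepsilon_2\varepsilon_3<0$. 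Since the pattern of $A-\lambda B$ is independent of $\lambda$, so is that of $N_\lambda$, hence that of $F-\lambda$; and $F$ is continuous (numerator and denominator continuous, $D$ nonvanishing). By the converse implication in Lemma~\ref{lm:keylemma}, $F$ is unimodal.

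Finally, for the monotonicity pattern: if $F$ is not monotonic it has exactly two intervals of monotonicity, so $F-\lambda$ has two sign changes for suitable $\lambda$; reading off the proof of Lemma~\ref{lm:keylemma}, the pattern $(-,+,-)$ corresponds to ``increasing then decreasing'' and $(+,-,+)$ to ``decreasing then increasing'', for $F$ exactly as for $A/B$, so combining this with the dichotomy just established gives that $F$ inherits the monotonicity pattern of $t\mapsto A(t)/B(t)$ when $\varepsilon_2\varepsilon_3>0$ and reverses it when $\varepsilon_2\varepsilon_3<0$. I expect the main obstacle to be the careful handling of the ``moreover'' clause of Theorem~\ref{th:integral}: one must first upgrade the inequality between the two counts of sign changes to an equality before the sign‑pattern conclusion is available, track the correct indices $\varepsilon_2,\varepsilon_3$ (not $\varepsilon_1,\varepsilon_2$), and absorb the constant sign of $D$ — which equals $\varepsilon_1$ and is precisely the reason for the normalisation $\varepsilon_1=1$ under which the stated condition reads $\varepsilon_2\varepsilon_3$. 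Beyond that, the argument is the verbatim counterpart of the one for Theorem~\ref{th:seriesratio}, with Theorem~\ref{th:integral} replacing Theorem~\ref{th:series} and the function $t\mapsto A(t)/B(t)$ replacing the sequence $\{a_k/b_k\}$.
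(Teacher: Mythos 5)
Your proof is correct and follows essentially the same route as the paper, which proves only the series version (Theorem~\ref{th:seriesratio}) and declares the integral version ``almost identical'': write $F-\lambda$ as a ratio whose numerator is the transform of $A-\lambda B$, use $B>0$ to reduce to the sign pattern of $A/B-\lambda$, apply the variation-diminishing Theorem~\ref{th:integral}, and close with Lemma~\ref{lm:keylemma}. Your explicit treatment of the denominator's constant sign $\varepsilon_1$ and of upgrading the sign-change inequality to an equality before invoking the ``moreover'' clause supplies exactly the details the paper leaves implicit.
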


\textbf{Remark}.  All scenarios present in the formulation of these theorems can be realized: $F(x)$ maybe increasing, decreasing or strictly unimodal on $I$ in the direction specified by the theorem.  To verify which scenario is realized for a given sequences $a_k$, $b_k$,  it suffices to verify the sign of  the derivative of $F$ at the endpoints of $I$: $F'(a+)$ and/or $F'(b-)$, where $a=\inf{I}$, $b=\sup{I}$.  

We will only prove Theorem~\ref{th:seriesratio}, the proof of Theorem~\ref{th:integralratio} is almost identical.
\begin{proof}
Assume without loss of generality that $\phi_k(x)\ge0$ on $I$ (otherwise factor out $-1$). For each $x\in I$ and arbitrary real $\lambda$ we have
\begin{equation}
	\label{eq6}
	F(x) -\lambda = \frac{\sum_{k=0}^\infty (a_k - \lambda b_k) \varphi_k(x)}{\sum_{k=0}^\infty b_k \varphi_k(x)}.
\end{equation}
The number of sign changes of $F(x) -\lambda$ on $I$ is equal to the number of sign 
changes of the function $G(x, \lambda):= \sum_{k=0}^\infty (a_k - \lambda b_k) 
\varphi_k(x) $ on $I$  (as a function of  $x$) due to the condition $b_k>0$. The function $G$ is a linear 
combination of a sequence of functions $(\varphi_k(x))_{k=0}^\infty$ with coefficients
$$
c_k(\lambda):=(a_k-\lambda b_k)=\Big(\frac{a_k}{b_k}-\lambda\Big)b_k.
$$
Clearly, $\mathrm{sign}(c_k(\lambda))=\mathrm{sign}(a_k/b_k- \lambda)$  due to positivity of $b_k$ .  By our assumptions, the sequence $\left(a_k/b_k\right)_{k=0}^\infty$ is a 
unimodal  sequence, so that by Lemma~\ref{lm:keylemma} $S^{-}((c_k(\lambda))_{k=0}^\infty)\leq 2$.  Moreover, for all 
$\lambda$ such that the sequence of coefficients  $(c_k(\lambda))_{k=0}^\infty$ has exactly $2$ changes of sign its sign pattern  remains constant. Hence, we are in the position to apply Theorem~\ref{th:series} yielding that the  number of sign changes of $F(x) -\lambda$ 
on $I$ is less than or equal to $S^{-}((c_k(\lambda))_{k=0}^\infty)\leq 2$. Moreover, by the same theorem for all $\lambda$ such that the function $F(x) -\lambda$ has exactly $2$ 
sign changes,  the sign pattern of the functions $F(x) -\lambda$ coincides  
with that of the sequences $(c_k(\lambda))_{k=0}^\infty$ if $\varepsilon_3\varepsilon_2=1$ or is reversed  if $\varepsilon_3\varepsilon_2=-1$.  As $\varepsilon_3\varepsilon_2$ does not depend on $\lambda$,  by Lemma~\ref{lm:keylemma}, we conclude that  for every $x\in I $ the function $F(x)$ is a unimodal function of $x$ possessing the monotonicity pattern stated in the theorem.  

\end{proof}

\section{Examples}
\begin{enumerate}
	\item\label{ex:power} $K(x,y)=x^y$ is well-known (and easily seen via the Vandermonde determinant) to be~$STP_{\infty}$ on $(0,\infty)\times(-\infty,\infty)$. This recovers the case of ratios of two power series and ratios of the Mellin transforms 
	$$
	F(y)=\frac{\int_0^{\infty}x^{y-1}A(x)dx}{\int_0^{\infty}x^{y-1}B(x)dx}.
	$$
	If $A(x)/B(x)$ is unimodal, then so is $F(y)$ and the monotonicity pattern of $A(x)/B(x)$  is preserved by $F(y)$ when it is strictly unimodal ($=$ non-monotonic). 
	
	\item\label{ex:exp} The kernel $K(x,y)=\exp(xy)\in STP_{\infty}$ on $(-\infty,\infty)\times(-\infty,\infty)$.  Hence, Theorem~\ref{th:seriesratio} is applicable to ratios of Dirichlet series:
	$$
	F(x)=\frac{\sum_{k=0}^\infty {a_k}e^{\lambda_{k}x}}{\sum_{k=0}^\infty {b_k}e^{\lambda_{k}x}},
	$$
	where $\lambda_0<\lambda_1<\cdots$.
	If $F(x)$ is strictly unimodal it inherits the monotonicity pattern of $\{a_k/b_k\}$. In a similar fashion, the ratios of two-sided and one-sided Laplace transforms (by restricting the domain of integration to $(-\infty,0)$ and changing variable $x\to -x$)
	$$
	F(y)=\frac{\int_{-\infty}^{\infty}e^{xy}A(x)dx}{\int_{-\infty}^{\infty}e^{xy}B(x)dx}~~\text{and}~~G(y)=\frac{\int_{0}^{\infty}e^{-xy}A(x)dx}{\int_{0}^{\infty}e^{-xy}B(x)dx}
	$$
	satisfy Theorem~\ref{th:integralratio}, thus recovering some of the results of \cite{YangTianJMAA2019}.  Note that the kernel $K(x,y)=e^{-xy}$ has the signature $(+,-,-)$ so that the monotonicity pattern of $A(x)/B(x)$ is preserved by $G(y)$ when it is not monotonic.

	\item\label{ex:generalizedStieltjes} The kernel $K(x,y)=(x+y)^{-\alpha}\in STP_{\infty}$ on $(0,\infty)\times(0,\infty)$ for each $\alpha>0$ \cite[(1.6)]{CG83}, so that Theorem~\ref{th:seriesratio} is applicable to the following ratio of series
	$$
	F(x)=\frac{\sum_{k=0}^\infty {a_k}(x+k)^{-\alpha}}{\sum_{k=0}^\infty {b_k}(x+k)^{-\alpha}}
	$$
	and Theorem~\ref{th:integralratio} is applicable to the ratio of the generalized Stieltjes transforms
	$$
	F(y)=\int_{0}^{\infty}\frac{A(x)dx}{(x+y)^{\alpha}}\bigg/\int_{0}^{\infty}\frac{B(x)dx}{(x+y)^{\alpha}}.
	$$

	\item\label{ex:Gamma} The kernel $K(x,y)=\Gamma(x+y)\in STP_{\infty}$ on $(0,\infty)\times(0,\infty)$, see \cite[Theorem~2.11]{GPSR2020} and  Example~\ref{ex:unequalGammaratio} below.  Hence, the kernel $\hat{K}(n,x)=(x)_n=\Gamma(x+n)/\Gamma(x)\in STP_{\infty}$ on  $\mathbb{N}_0\times(0,\infty)$ and Theorem~\ref{th:seriesratio} is applicable to the ratio of factorial series:
	$$
	F(x)=\frac{\sum_{k=0}^\infty {a_k}(x)_k}{\sum_{k=0}^\infty {b_k}(x)_k}.
	$$
	The monotonicity pattern of $\{a_k/b_k\}$ is preserved by $F(x)$ if it is not monotonic.
	Note that 
	$$
	b_0F'(0^+)=\sum_{k=1}^{\infty}b_k(k-1)!\Big(\frac{a_k}{b_k}-\frac{a_0}{b_0}\Big).
	$$
	and $F(x+1)-F(x)$ is eventually negative (positive) if $\{a_k/b_k\}$ is eventually decreasing (increasing) according to Lemma~\ref{lm:factorialasymp}  proved in Appendix~\ref{app:2} to this paper. As $F(x)$ is unimodal if  $\{a_k/b_k\}$ is unimodal by Theorem~\ref{th:seriesratio} it cannot oscillate and  $F(x+1)-F(x)<0$ ($F(x+1)-F(x)>0$) for large $x$ implies that $F(x)$ is decreasing (increasing) for large $x$.   Hence, if $\{a_k/b_k\}$ is first  increasing and then decreasing, then so is $F(x)$  if $F'(0^+)>0$ or $F(x)$ is monotonically decreasing if $F'(0^+)<0$. Similar result holds  if $\{a_k/b_k\}$ is first decreasing  and then increasing. The case $F'(0^+)=0$ may require further investigation.
	
	The beta kernel $K(x,y)=B(x,y)=\Gamma(x+y)/[\Gamma(x)\Gamma(y)]\in STP_{\infty}$ on $(0,\infty)\times(0,\infty)$ by a similar argument, so that one may consider the ratio of the functional series in the sequence $B(x,n)$  on  $(0,\infty)\times\mathbb{N}_0$ instead.
	
	\item\label{ex:incompletegammas}  The argument in \cite[Theorem~2.11]{GPSR2020} for the gamma kernel in the previous example works just as well for the incomplete gamma kernels $K_1(x,y)=\gamma(x+y,\alpha)$ and $K_2(x,y)=\Gamma(x+y,\alpha)$, $\alpha>0$, where 
	$$
	\gamma(z,\alpha)=\int_0^{\alpha}t^{z-1}e^{-t}dt,~~~~\Gamma(z,\alpha)=\int_{\alpha}^{\infty}t^{z-1}e^{-t}dt
	$$
	are the lower and the upper incomplete gamma functions, respectively.  Hence, $K_i(x,y)\in STP_{\infty}$ on $(0,\infty)\times(0,\infty)$ for $i=1,2$.  This implies that the incomplete Pochhammer symbols \cite{SCA2012} are also $STP_{\infty}$ on  $\mathbb{N}_0\times(0,\infty)$:
	$$
    \hat{K}_1(x,n)=(x,\alpha)_n=\frac{\gamma(x+n,\alpha)}{\Gamma(x)}~~\text{and}~~ \hat{K}_2(x,n)=[x,\alpha]_n=\frac{\Gamma(x+n,\alpha)}{\Gamma(x)},
	$$ 
	and Theorem~\ref{th:seriesratio} is applicable to the ratio of '' incomplete factorial series''.  Some interesting special functions of communication theory like Nuttall and Marcum $Q$-functions can be defined using such series \cite{BrychkovITSF2014}.
	
	\item\label{ex:reciprocalGamma} It follows from \cite[Part V, Chap. I, Problem~77]{PS} and \cite[Chapter~3, Lemma~2.2]{KarlinBook} that  the kernel $\hat{K}(n,x)=1/(x)_n\in SSR_{\infty}$ on  $\mathbb{N}_0\times(0,\infty)$. We were unable to locate the more general fact in the literature, which we believe (and verified numerically) to be true: the kernel $K(x,y)=1/\Gamma(x+y)\in SSR_{\infty}$ on $(0,\infty)\times(0,\infty)$.  The property required here is that $\hat{K}(n,x)$ is $SSR_3$ with the signature $(+,-,-)$ can be proved directly by taking the limit $q\to1$ in Lemma~\ref{lm:invqfactorial} proved in Appendix~\ref{app:1}.  Hence, Theorem~\ref{th:seriesratio} is applicable to the ratio of the inverse factorial series:
	$$
	F(x)=\frac{\sum_{k=0}^\infty {a_k}/(x)_k}{\sum_{k=0}^\infty {b_k}/(x)_k}.
	$$
	Monotonicity pattern of $\{a_k/b_k\}$ is preserved by $F(x)$ if it is not monotonic.  Note, further, that 
	$$
	b_0^2F'(x)= \frac{b_0b_1}{x^2}\bigg(\frac{a_0}{b_0}-\frac{a_1}{b_1}\bigg)+O(1/x^3)~~\text{as}~x\to+\infty
	$$
	and, after some calculation (see details in Lemma~\ref{lm:invfactorialzero} in Appendix~\ref{app:2}):
	$$
	\Bigg(\sum_{k=1}^{\infty}\frac{b_k}{(k-1)!}\Bigg)^{2}F'(0+)=\sum_{k=1}^{\infty}\frac{b_0b_k}{(k-1)!}\Big(\frac{a_0}{b_0}-\frac{a_k}{b_k}\Big)+
	\sum_{k=1}^{\infty}\sum_{j=1}^{k-1}\frac{b_kb_j(H_{j-1}-H_{k-1})}{(k-1)!(j-1)!}\Big(\frac{a_k}{b_k}-\frac{a_j}{b_j}\Big),
	$$
	where $H_n$ denotes the $n$-th harmonic number.  Hence, if $a_k/b_k$ is first increasing and eventually decreasing, then $F(x)$ is decreasing for large $x$ and either monotonically decreasing on $(0,\infty)$ if $F'(0+)<0$ or first increasing and then decreasing if  $F'(0+)>0$. The case $F'(0+)=0$ requires further investigation.
	
	\item\label{ex:qGamma} Define the $q$-gamma function for $0<q<1$ by \cite[(21.16)]{KacCheung} (see also \cite[(1.1)]{IsmailMuldoon2013})
	$$
	\Gamma_q(z)=(1-q)^{1-z}\frac{(q;q)_{\infty}}{(q^z;q)_{\infty}},~~~(a;q)_n=\prod\limits_{j=0}^{n-1}(1-aq^{j}).
	$$
	We conjecture that $K(x,y)=\Gamma_{q}(x+y)\in STP_{\infty}$. This probably can be proved using the representation \cite[(21.6)]{KacCheung}, but we will not pursue this in this paper. It is sufficient for our purposes that  $\hat{K}(n,x)=(q^x;q)_n\in STP_3$ on $\mathbb{N}_0\times(0,\infty)$ which is demonstrated in Lemma~\ref{lm:qfactorial} in Appendix~\ref{app:1}. Hence, Theorem~\ref{th:seriesratio} is applicable to the ratio of $q$-factorial series:
	$$
	F(x)=\frac{\sum_{k=0}^\infty {a_k}(q^x;q)_k}{\sum_{k=0}^\infty {b_k}(q^x;q)_k}.
	$$
	If $F(x)$ is strictly unimodal its  monotonicity pattern is inherited from that of the sequence $\{a_k/b_k\}$.
	
	\item\label{ex:reciprocalqGamma} We further conjecture that $K(x,y)=1/\Gamma_{q}(x+y)\in SSR_{\infty}$, $0<q<1$.  The claim sufficient for the purposes of this paper, namely that $\hat{K}(n,x)=1/(q^x;q)_n\in SSR_3$ on $\mathbb{N}_0\times(0,\infty)$ with the sign pattern $(+,-,-)$ is proved in Lemma~\ref{lm:invqfactorial} in Appendix~\ref{app:1}. Hence, Theorem~\ref{th:seriesratio}  is applicable to the ratio of the inverse $q$-factorial series:
	$$
	F(x)=\frac{\sum_{k=0}^\infty {a_k}/(q^x;q)_k}{\sum_{k=0}^\infty {b_k}/(q^x;q)_k}.
	$$
	If $F(x)$ is strictly unimodal its  monotonicity pattern is inherited from that of the sequence $\{a_k/b_k\}$.

	\item\label{ex:Gammaratio} According to \cite[(11)]{KPCMFT2016} we have ($\c=(c_1,\ldots,c_p)$, $\d=(d_1,\ldots,d_p)$)
	$$
	\prod\limits_{i=1}^{p}\frac{\Gamma(x+c_i)}{\Gamma(x+d_i)}=\int\limits_{0}^{1}t^x
	\bigg\{G^{p,0}_{p,p}\left(t\,\,\vline\begin{array}{c}\!\d\\
		\!\c\end{array}\!\!\right)+\delta_1 I_{\{\mu=0\}}\!\bigg\}\frac{dt}{t},
	$$
	where $\mu=\sum_{i=1}^{p}(d_i-c_i)\ge0$, $\delta_1$ denotes the unit mass concentrated at $1$ and $I_{\{\mu=0\}}=1$ if $\mu=0$ and $I_{\{\mu=0\}}=0$ otherwise.  Here $G^{p,0}_{p,p}$ stands for a particular case of Meijer's $G$ function which we prefer to call the Meijer-N{\o}rlund function, see \cite[p.139]{KPCMFT2016} for a definition.  For current purposes we only need one property of this function \cite[section~2]{KPCMFT2016}:
	\begin{equation}\label{eq:vnonnegative}
		\text{if}~v(t)=\sum_{j=1}^{p}(t^{c_j}-t^{d_j})\ge0~\text{for}~t\in(0,1),~\text{then}~G^{p,0}_{p,p}\left(t\,\,\vline\begin{array}{c}\!\d\\\!\c\end{array}\!\!\right)\ge0~\text{for}~t\in(0,1).
	\end{equation}
	Furthermore, convenient sufficient conditions for $v(t)\ge0$ are the following:
	\begin{equation}\label{eq:c-majorized-by-d}
		\begin{split}
			& 0\leq{c_1}\leq{c_2}\leq\cdots\leq{c_p},~~
			0\leq{d_1}\leq{d_2}\leq\cdots\leq{d_p},
			\\
			&~\text{and}~\sum\limits_{i=1}^{k}c_i\leq\sum\limits_{i=1}^{k}d_i~~\text{for}~~k=1,2\ldots,p.
		\end{split}
	\end{equation}
	These and other conditions for non-negativity of $G^{p,0}_{p,p}$ above can be found in \cite[section~2]{KPCMFT2016}.  Assuming that $G$ function is the integrand is non-negative we can write
	$$
	K(x,y)=\prod\limits_{i=1}^{p}\frac{\Gamma(x+y+c_i)}{\Gamma(x+y+d_i)}=\int\limits_{0}^{1}K_1(x,t)K_2(t,y)d\sigma(t),
	$$
	where $K_1(x,t)=t^x$, $K_2(t,y)=t^y$ and $d\sigma(t)$ is a non-negative measure supported on $[0,1]$. Hence, by the basic composition formula \cite[Lemma~1.1, Chapter 3, \S1]{KarlinBook}, the kernel $K(x,y)$ is $STP_\infty$ on $(0,\infty)\times(0,\infty)$, so that the kernel 
	$$
	K(n,x)=\prod\limits_{i=1}^{p}\frac{(x+c_i)_n}{(x+d_i)_n}\in STP_{\infty}~\text{on}~\mathbb{N}_0\times(0,\infty).   
	$$
	Hence, Theorem~\ref{th:seriesratio} is applicable to the ratio
	$$
	F(x)=\frac{\sum\limits_{k=0}^\infty {a_k}\prod\limits_{i=1}^{p}[(c_i+x)_k/(d_i+x)_k]}{\sum\limits_{k=0}^\infty {b_k}\prod\limits_{i=1}^{p}[(c_i+x)_k/(d_i+x)_k]},
	$$
	and if $F(x)$ is not monotonic it inherits monotonicity pattern of $\{a_k/b_k\}$.

	\item\label{ex:unequalGammaratio} We can extend the previous example as follows. Suppose $p\ge0$ and $q\ge 1$. Consider
	$$
	K(x,y)=
	\prod\limits_{i=1}^{p}\frac{\Gamma(x+y+c_i)}{\Gamma(x+y+d_i)}\prod_{j=1}^{q}\Gamma(h_j+x+y).
	$$
	By the previous example if $\sum_{i=1}^{p}(d_i-c_i)>0$, then 
	$$
	\prod\limits_{i=1}^{p}\frac{\Gamma(x+y+c_i)}{\Gamma(x+y+d_i)}=\mathcal{M}(\rho(t))(x+y)
	$$
	where $\mathcal{M}$ is the Mellin transform and 
	$$
	\rho(t)=I_{[0,1]}(t)G^{p,0}_{p,p}\left(t\,\,\vline\begin{array}{c}\!\d\\
		\!\c\end{array}\!\!\right).
	$$
	Hence,
	$$
	K(x,y)=\mathcal{M}(\rho(t)\ast\gamma_1(t)\ast\cdots\ast\gamma_q(t))(x+y),~~~~\gamma_j(t)=t^{h_j}e^{-t},
	$$
	where $\ast$ is the Mellin convolution:
	$$
	h\ast{g}(x)=\int_0^{\infty}h(t)g\Big(\frac{x}{t}\Big)\frac{dt}{t}.
	$$
	Hence, if $v(t)=\sum_{j=1}^{p}(t^{c_j}-t^{d_j})\ge0$ on $(0,1)$ and $h_j\ge0$, $K(x,y)=\mathcal{M}(f)(x+y)$, where the function $f(t)=\rho(t)\ast\gamma_1(t)\ast\cdots\ast\gamma_q(t)\ge0$ on $(0,\infty)$, so that 
	$K(x,y)\in STP_{\infty}$ by the basic composition formula as in the previous example. In particular, the kernel 
	$$
	K(x,y)=\prod_{i=1}^{q}\Gamma(x+y+h_i)\in STP_{\infty}
	$$
	for any $h_i\ge0$ and any natural $q$ and so is $K(n,x)=\prod_{i=1}^{q}(h_i+x)_n$ on $\mathbb{N}_0\times(0,\infty)$.
	
	\item\label{ex:reciprocalGammaratio} Numerical experiments suggest that the kernel  
	$$
	K(x,y)=\frac{\Gamma(c+x+y)}{\Gamma(d+x+y)}
	$$
	is also $SR_{\infty}$ when $c>d$. In particular, it is $SR_3$ with the signature $(+,-,-)$. We were unable to locate any results for sign regularity of this kernel in the literature.  The above kernel is an example of the product of two sign regular Hankel kernels (i.e. the kernels of the form $K(x,y)=F(x+y)$) and we believe that such product is sign regular in general.  The corresponding result for $TP_r$ matrices is known \cite[Theorem~4.5]{FJS2017}, but $F(x_i+y_j)$ is not, in general, a Hankel matrix.  
	Apoorva Khare informed us that he has a method for approximating Hankel kernels by Hankel matrices which may lead to a proof of the required property.   We postpone the details to another publication.

	\item\label{ex:hypergeometric} Suppose $\a=(a_1,\ldots,a_p)$ with $a_i>0$, $i=1,\ldots,p$, and $\mathbf{m}=(m_1,m_2,\ldots,m_p)$ comprises non-negative integers.  Then, according to \cite[Theorem~3.2]{Richards1990}, the kernel 
	$$
	K_1(x,y)={_pF_{p}}\!\left(\begin{array}{c}\a\\\a+\mathbf{m}\end{array}\vline\:xy\right)\in STP_{\infty}~\text{on}~\mathbb{R}^2.
	$$
	Note also that for any positive $\a$, $\b$  the kernel 
	$$
	K_2(x,y)={_pF_{q}}\!\left(\begin{array}{c}\a\\\b\end{array}\vline\:xy\right)\in STP_{\infty}~\text{on}~(0,\infty)\times(0,\infty)
	$$
	due to \cite[p.101, below (1.9) in Chapter 3]{KarlinBook}.  This implies that Theorem~\ref{th:integralratio} is applicable to the ratios of the integral transforms
	$$
	F(x)=\frac{\int_{-\infty}^{\infty}K_1(x,t)A(t)w(t)dt}{\int_{-\infty}^{\infty}K_1(x,t)B(t)w(t)dt}~\text{and}~
	G(x)=\frac{\int_{0}^{\infty}K_2(x,t)A(t)w(t)dt}{\int_{0}^{\infty}K_2(x,t)B(t)w(t)dt}
	$$
	and  the monotonicity pattern of $A(x)/B(x)$ is preserved both by $F(x)$ and $G(x)$ when they are not monotonic.

 \item\label{ex:Bessel}  Consider the kernel $K(n,x)=I_{n}(x)$, where $I_{\nu}$ is the modified Bessel function  \cite[p.217]{MaoTianCompRend2023}, \cite[p.101]{BuchstaberGlutsyuk2019}.  According to \cite[Theorem~1.5]{BuchstaberGlutsyuk2019} this kernel is $STP_{\infty}$ on $\N_0\times(0,\infty)$.  Hence, Theorem~\ref{th:seriesratio} is applicable to the ratio of the modified Bessel function expansions of the form
 $$
 F(x)=\frac{\sum_{k=0}^\infty {a_k}I_k(x)}{\sum_{k=0}^\infty {b_k}I_k(x)}.
 $$
Similarly, restricting the interval $I$ in Theorem~\ref{th:integralratio} to non-negative integers, we conclude that 
 $$
 G(k)=\frac{\int_{0}^{\infty}I_k(t)A(t)w(t)dt}{\int_{0}^{\infty}I_k(x)B(t)w(t)dt}
 $$
 is unimodal on $\N_0$ if so is $A(x)/B(x)$.  If $F(x)$ ($G(k)$) is not monotonic it inherits the monotonicity pattern of $\{a_k/b_k\}$ ($A(x)/B(x)$).
 
 \item\label{ex:Laguerre} Let $L^{\alpha}_j(t)$ be the generalized Laguerre polynomials \cite[p.3]{DOP2019} with $\alpha>-1$ and $j=0,1,\ldots$. According to \cite[Theorem~2]{DOP2019} the matrix $[L^{\alpha}_{j}(t_i)]_{0\le{i,j}\le{n}}$ is $STP$ for each $n\in\N$ and $\alpha>-1$ if the points $t_i$ satisfy
 $t_n<t_{n-1}<\cdots<t_1<t_0<0$.  This implies that ordering the points ascending $\hat{t}_i=t_{n-i}$ the kernel  $K(j,t)=L^{\alpha}_{j}(t)$ is $SSR_{\infty}$ with the sign pattern $(-1)^{r(r-1)/2}$ (where $r$ is the size of the corresponding minor), in particular, it is $SSR_{3}$ with the sign pattern $(+,-,-)$.  Hence, Theorem~\ref{th:seriesratio} is applicable to the ratio of the expansions in the Laguerre polynomials when the argument is outside of the interval of orthogonality:
 $$
  F(x)=\frac{\sum_{k=0}^\infty {a_k}L^{\alpha}_k(x)}{\sum_{k=0}^\infty {b_k}L^{\alpha}_k(x)}
 $$
 where $\alpha>-1$, $x<0$, and if $F(x)$ is not monotonic it inherits the monotonicity pattern of $\{a_k/b_k\}$.
	 
\end{enumerate}

\section{Applications}

Given an $m$-tuple $\c=(c_1,\ldots,c_m)$, we will use the shorthand notation
$(\c+\mu)_k=\prod_{i=1}^{m}(c_i+\mu)_k$ and the product is understood to be $1$ if $m=0$.  Define 
\begin{equation}\label{eq:pFqratio}
F(\mu):=\frac{{_pF_{q}}\!\left(\begin{array}{c}\c+\mu,\a_1\\\d+\mu,\b_1\end{array}\vline\:x\right)}{{_sF_{t}}\!\left(\begin{array}{c}\c+\mu,\b_2\\\d+\mu,\a_2\end{array}\vline\:x\right)}=\frac{\sum\limits_{k=0}^\infty {f_k}(\c+\mu)_k/(\d+\mu)_k}{\sum\limits_{k=0}^\infty {g_k}(\c+\mu)_k/(\d+\mu)_k},
\end{equation}
where $\c$ or $\d$ can be missing. By writing $\a=(\a_1,\a_2)$, $\b=(\b_1,\b_2)$ we will have
$$
\frac{f_k}{g_k}=\frac{(\a_1)_k(\a_2)_k}{(\b_1)_k(\b_2)_k}=\frac{(\a)_k}{(\b)_k}.
$$
 The following fact is well-known and is straightforward to verify: both log-concavity and log-convexity of a sequence imply its unimodality (and if a log-concave sequence is not monotone, then it first increases and then decreases while for log-convex sequence it is reverse).

Log-concavity of $\{f_k/g_k\}$ reduces to the inequality  (similarly for log-convexity)
$$
\frac{(\a+k-1)}{(\b+k-1)}\ge\frac{(\a+k)}{(\b+k)}~\text{for}~k=1,2,\ldots 
$$
A sufficient condition for the above inequality to hold  is the decrease of the rational function 
$$
R_{m,n}(x)=\frac{(\a+x)}{(\b+x)}=\frac{\prod_{k=1}^{m}(a_k+x)}{\prod_{k=1}^{n}(b_k+x)}
$$
 on $(0,\infty)$.  Here $m$, $n$ denote the number of components in $\a$ and $\b$, respectively.   Let $e_j(\a)=e_j(a_1,\dots,a_m)$ be the $j$-th elementary symmetric polynomial. We have a slight extension of Biernaki--Krzy\.z lemma as follows \cite[Lemmas~3,4]{KalmykovKarp2017}:
\begin{lemma}\label{lm:Rmonotonic}
	If $m\le{n}$ and
	\begin{equation*}
		\frac{e_n(\b)}{e_m(\a)}\leq \frac{e_{n-1}(\b)}{e_{m-1}(\a)}\leq \dots \leq \frac{e_{n-m+1}(\b)}{e_1(\a)}\leq e_{n-m}(\b),
	\end{equation*}
	then the function $R_{m,n}(x)$ is monotone decreasing on $(0,\infty)$.   These inequalities  hold, in particular, if  
	$0<a_1\le a_2\le\cdots\le a_m$, $0<b_1\le b_2\le\cdots \le b_{m}$
	and $\sum_{j=1}^{k}a_j\le\sum_{j=1}^{k}b_j$ for $k=1,\ldots,m$.
\end{lemma}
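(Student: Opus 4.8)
The plan is to reduce the decrease of $R_{m,n}$ to the polynomial form of the Biernacki--Krzy\.{z} lemma by means of the order-reversing substitution $x=1/t$. First I would write, for $t>0$,
\[
R_{m,n}(1/t)=\frac{\prod_{k=1}^{m}(a_k+1/t)}{\prod_{k=1}^{n}(b_k+1/t)}=t^{\,n-m}\,\frac{\prod_{k=1}^{m}(1+a_kt)}{\prod_{k=1}^{n}(1+b_kt)}=:g(t),
\]
and note that, $x\mapsto 1/x$ being a decreasing bijection of $(0,\infty)$ onto itself, $R_{m,n}$ is decreasing on $(0,\infty)$ exactly when $g$ is increasing on $(0,\infty)$. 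Expanding the two products gives $g=A/B$ with
\[
A(t)=\sum_{l=0}^{m}e_l(\a)\,t^{\,n-m+l},\qquad B(t)=\sum_{i=0}^{n}e_i(\b)\,t^{\,i},
\]
so that, regarded as polynomials in $t$ of degree $n$, $B$ has all coefficients $e_i(\b)>0$, while the coefficient of $t^{\,j}$ in $A$ equals $0$ for $j<n-m$ and $e_{j-(n-m)}(\a)$ for $n-m\le j\le n$.

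Next I would invoke the polynomial Biernacki--Krzy\.{z} lemma (see \cite{BK}, \cite[Lemma~1]{KSJMAA2010}): if the sequence of coefficient quotients of $A$ over $B$,
\[
0,\ \dots,\ 0,\ \frac{e_0(\a)}{e_{n-m}(\b)},\ \frac{e_1(\a)}{e_{n-m+1}(\b)},\ \dots,\ \frac{e_m(\a)}{e_n(\b)},
\]
is nondecreasing, then $g=A/B$ is increasing on $(0,\infty)$. The opening block of zeros is harmless because $e_0(\a)/e_{n-m}(\b)=1/e_{n-m}(\b)\ge0$, so the requirement is precisely
\[
\frac{e_0(\a)}{e_{n-m}(\b)}\le\frac{e_1(\a)}{e_{n-m+1}(\b)}\le\cdots\le\frac{e_m(\a)}{e_n(\b)};
\]
taking reciprocals (every term being positive) reverses this into the chain
\[
\frac{e_n(\b)}{e_m(\a)}\le\frac{e_{n-1}(\b)}{e_{m-1}(\a)}\le\cdots\le\frac{e_{n-m+1}(\b)}{e_1(\a)}\le e_{n-m}(\b),
\]
which is the hypothesis of the lemma. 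This settles the first assertion.

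For the ``in particular'' clause I would first rephrase that chain in the equivalent step form $e_{n-m+k}(\b)/e_{n-m+k-1}(\b)\le e_k(\a)/e_{k-1}(\a)$ for $k=1,\dots,m$, i.e.\ a term-by-term comparison of ratios of consecutive elementary symmetric polynomials of $\b$ and of $\a$. The plan is then to connect the two parameter tuples, with the help of the stated partial-sum inequalities, by a finite sequence of elementary one-coordinate increases and $T$-transforms, thereby reducing the claim to a one- or two-variable inequality that can be verified by a direct computation; this is the content of \cite[Lemma~4]{KalmykovKarp2017}. I expect this to be the main obstacle, because ratios of consecutive $e_j$'s are not Schur-monotone, so one must keep track of the simultaneous effect of each transfer on the two neighbouring indices; by contrast the first assertion is an essentially mechanical consequence of the substitution $x=1/t$ and the Biernacki--Krzy\.{z} lemma.
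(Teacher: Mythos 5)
The paper offers no proof of this lemma at all --- it is imported verbatim from \cite[Lemmas~3,4]{KalmykovKarp2017} --- so there is no internal argument to compare against. Your derivation of the first assertion is correct and complete modulo the polynomial Biernacki--Krzy\.{z} lemma, which the paper itself treats as known. The substitution $x=1/t$ is sound but dispensable: applying the same lemma directly to $\prod_k(a_k+x)\big/\prod_k(b_k+x)$ expanded in powers of $x$ (numerator coefficients $e_{m-l}(\a)$, denominator coefficients $e_{n-l}(\b)$, the ratio sequence now required to be non-increasing, with the zeros appearing at the tail $l>m$ instead of at the head) produces the identical chain without a change of variables. Either way, the first clause is established.

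For the second assertion, your rewriting of the chain in the step form $e_{n-m+k}(\b)\,e_{k-1}(\a)\le e_{n-m+k-1}(\b)\,e_k(\a)$ is correct, but the proposal then stops at a plan and defers the actual derivation to \cite[Lemma~4]{KalmykovKarp2017}, so as a blind proof this part is not done. More importantly, had you pushed the plan through you would have hit a wall that is worth naming: the ``in particular'' hypotheses as printed do not imply the chain. Already for $m=n=1$ the chain reads $e_1(\b)/e_1(\a)\le e_0(\b)$, i.e.\ $b_1\le a_1$, whereas the displayed hypotheses give $a_1\le b_1$; for instance $a_1=1$, $b_1=2$ satisfies them while $R_{1,1}(x)=(1+x)/(2+x)$ is increasing. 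Since your proof of the first clause confirms that the chain does force $R_{m,n}$ to decrease, the partial-sum inequalities in the second clause must run the other way, $\sum_{j\le k}b_j\le\sum_{j\le k}a_j$; the condition as printed is the one that makes $R_{m,n}$ \emph{increasing} (compare the remark following the lemma about applying it to $1/R_{m,n}$). So the obstacle you anticipated in the ``in particular'' clause is real, but it stems from a sign slip in the statement rather than from any genuine difficulty with the symmetric-function manipulations.
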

\textbf{Remark.} Clearly, we can apply the above lemma to $1/R_{m,n}(x)$ and get conditions for $R_{m,n}$ to be increasing and hence for $\{f_k/g_k\}$ to be log-convex (and still unimodal).

\begin{theorem}
Suppose the kernel 
$$
K(\mu,n)=\frac{(\c+\mu)_n}{(\d+\mu)_n}\in SR_3~\text{on}~(0,\infty)\times\mathbb{N}_0 
$$
and $\a=(\a_1,\a_2)$, $\b=(\b_1,\b_2)$ satisfy the conditions of Lemma~\ref{lm:Rmonotonic}. Then the function $F(\mu)$ defined in \eqref{eq:pFqratio} is unimodal on $(0,\infty)$.  In particular, this is true if the kernel $K(\mu,n)$ is one of those given in  Examples~\ref{ex:Gamma}, \ref{ex:reciprocalGamma}, \ref{ex:Gammaratio} and \ref{ex:unequalGammaratio}. 
\end{theorem}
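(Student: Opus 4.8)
The plan is to identify $F(\mu)$ with a quotient of functional series of precisely the type treated by Theorem~\ref{th:seriesratio} and then to check its hypotheses in turn. First I would expand the numerator and denominator in \eqref{eq:pFqratio}: setting $\varphi_n(\mu)=(\c+\mu)_n/(\d+\mu)_n=K(\mu,n)$, $f_k=(\a_1)_kx^k/((\b_1)_kk!)$ and $g_k=(\b_2)_kx^k/((\a_2)_kk!)$, we obtain $F(\mu)=\sum_{k\ge0}f_k\varphi_k(\mu)\big/\sum_{k\ge0}g_k\varphi_k(\mu)$, the powers $x^k$ cancelling in the quotient and leaving $f_k/g_k=(\a)_k/(\b)_k$ with $\a=(\a_1,\a_2)$, $\b=(\b_1,\b_2)$.

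Next I would verify the requirements of Theorem~\ref{th:seriesratio}. Sign regularity: since transposing a matrix leaves its determinant unchanged, the kernel $(n,\mu)\mapsto\varphi_n(\mu)$ on $\N_0\times(0,\infty)$ inherits the property $SR_3$, with the same sign sequence $(\varepsilon_1,\varepsilon_2,\varepsilon_3)$, from the hypothesis $K(\mu,n)\in SR_3$ on $(0,\infty)\times\N_0$. Positivity of the denominator coefficients: the conditions of Lemma~\ref{lm:Rmonotonic} force all components of $\a_2$ and $\b_2$ to be positive, so $(\a_2)_k>0$ and $(\b_2)_k>0$; since $g_k$ still carries the factor $x^k$, positivity $g_k>0$ requires $x>0$, which we assume (as is customary for such hypergeometric ratios). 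Non-vanishing of the denominator: $\varphi_0\equiv1$ and $g_0=1$, so after the harmless sign normalisation of $\varphi_k$ used in the proof of Theorem~\ref{th:seriesratio} the denominator is bounded below by $1$. Uniform convergence on compact subsets of $(0,\infty)$: the two series in \eqref{eq:pFqratio} are generalized hypergeometric functions of $x$, assumed to be within their common domain of convergence, with terms continuous in $\mu$ and dominated on a compact $\mu$-interval by a convergent series (a Weierstrass $M$-test).

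The only substantive step is unimodality of the quotient sequence $\{f_k/g_k\}=\{(\a)_k/(\b)_k\}$. I would reuse the computation preceding Lemma~\ref{lm:Rmonotonic}: the ratio of consecutive terms of $\{(\a)_k/(\b)_k\}$ equals $R_{m,n}(k)$, so this sequence is log-concave precisely when $R_{m,n}(k-1)\ge R_{m,n}(k)$ for all $k\ge1$, a consequence of the monotone decrease of $R_{m,n}$ on $(0,\infty)$ provided by Lemma~\ref{lm:Rmonotonic} under the stated inequalities on $\a$ and $\b$; and a log-concave positive sequence is unimodal (the well-known fact recalled there). Theorem~\ref{th:seriesratio} then gives that $F(\mu)$ is unimodal on $(0,\infty)$. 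For the final ``in particular'' assertion I would merely observe that each kernel in Examples~\ref{ex:Gamma}, \ref{ex:reciprocalGamma}, \ref{ex:Gammaratio}, \ref{ex:unequalGammaratio} and, under Conjecture~1, \ref{ex:reciprocalGammaratio}, is of the form $(\c+\mu)_n/(\d+\mu)_n$ for a suitable choice of the (possibly empty) vectors $\c,\d$, and is shown there to be $SR_3$ --- in fact $STP_\infty$ or $SSR_\infty$ --- on $(0,\infty)\times\N_0$, so the first part of the theorem applies directly.

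I do not anticipate a deep obstacle: the analytic content is carried by Theorem~\ref{th:seriesratio} (hence by the variation-diminishing Theorem~\ref{th:series}), and the monotonicity input by Lemma~\ref{lm:Rmonotonic}. The steps demanding the most care are bookkeeping ones: confirming that $SR_3$ is preserved under transposition of the kernel; noticing that the factored coefficients $f_k,g_k$ retain the factor $x^k$, so that the requirement $b_k>0$ in Theorem~\ref{th:seriesratio} forces $x>0$; and translating ``$R_{m,n}$ decreasing on $(0,\infty)$'' correctly into log-concavity, and hence unimodality, of the \emph{integer} sequence $\{(\a)_k/(\b)_k\}$.
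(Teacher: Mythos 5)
Your proposal is correct and follows essentially the same route as the paper: the paper states this theorem without a separate proof precisely because the preceding discussion already reduces it to Theorem~\ref{th:seriesratio} via the identification $\varphi_k(\mu)=(\c+\mu)_k/(\d+\mu)_k$, $f_k/g_k=(\a)_k/(\b)_k$, and the log-concavity (hence unimodality) of this quotient sequence supplied by Lemma~\ref{lm:Rmonotonic}. Your additional bookkeeping remarks (invariance of $SR_3$ under transposition of the kernel, and the implicit requirement $x>0$ so that $g_k>0$) are accurate and consistent with what the paper tacitly assumes.
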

In some particular cases of the above theorem we can say more.  Take $\a=(\a_1,\a_2)$, $\b=(\b_1,\b_2)$ satisfying Lemma~\ref{lm:Rmonotonic} and consider 
$$
F_1(\mu)={_pF_{q}}\!\left(\begin{array}{c}\mu,\a_1\\\b_1\end{array}\vline\:x\right)\!\!\bigg/\!{_sF_{t}}\!\left(\begin{array}{c}\mu,\b_2\\\a_2\end{array}\vline\:x\right),
$$
so that we are in the framework of Example~\ref{ex:Gamma}.
We conclude that $F_1(\mu)$ is decreasing for large $\mu$ by Lemma~\ref{lm:factorialasymp} while the sign of $F'(0)$  coincides, by an easy calculation, with that of 
$$
\frac{(\a)_1}{(\b)_1}{_{p+1}F_{q+1}}\!\left(\begin{array}{c}1,\a_1+1\\2,\b_1+1\end{array}\vline\:x\right)-\frac{(\b_2)_1}{(\a_2)_1}{_{s+1}F_{t+1}}\!\left(\begin{array}{c}1,\b_2+1\\2,\a_2+1\end{array}\vline\:x\right).
$$
Hence, if this quantity is negative, then $F_1(\mu)$ is monotone decreasing on $(0,\infty)$. If it is positive, $F_1(\mu)$ is first increasing and then decreasing.  On the other hand, taking 
$$
F_2(\mu)={_pF_{q}}\!\left(\begin{array}{c}\a_1\\\mu,\b_1\end{array}\vline\:x\right)\!\!\bigg/\!{_sF_{t}}\!\left(\begin{array}{c}\b_2\\\mu,\a_2\end{array}\vline\:x\right)
$$
we get a particular case of Example~\ref{ex:reciprocalGamma}.  Thus we conclude that $F_2(\mu)$ is eventually decreasing by the asymptotic formula given in Example~\ref{ex:reciprocalGamma} (still assuming that $\a$, $\b$ satisfy Lemma~\ref{lm:Rmonotonic}).  The derivative $F_2'(0+)$ can be computed by Lemma~\ref{lm:invfactorialzero}.

In a similar fashion, by employing Examples~\ref{ex:qGamma} and \ref{ex:reciprocalqGamma} we get conditions for unimodality of the $q$-hypergeometric ratios
$$
\mu\to{_p\varphi_{q}}\!\left(\begin{array}{c}q^{\mu},q^{\a_1}\\q^{\b_1}\end{array}\vline\:q;x\right)\!\!\bigg/\!{_s\varphi_{t}}\!\left(\begin{array}{c}q^{\mu},q^{\b_2}\\q^{\a_2}\end{array}\vline\:q;x\right)
$$
$$
\mu\to{_p\varphi_{q}}\!\left(\begin{array}{c}q^{\a_1}\\q^{\mu},q^{\b_1}\end{array}\vline\:q;x\right)\!\!\bigg/\!{_s\varphi_{t}}\!\left(\begin{array}{c}q^{\b_2}\\q^{\mu},q^{\a_2}\end{array}\vline\:q;x\right).
$$
Note, however, that for the $q$ case the $m$-tuples $\a$ and $\b$ in Lemma~\ref{lm:Rmonotonic} should replaced by the $m$-tuples:
$$
\hat{\a}=(q^{-a_{1}}-1,q^{-a_{2}}-1,\ldots,q^{-a_{m}}-1),~~~\hat{\:\:\b}=(q^{-b_{1}}-1,q^{-b_{2}}-1,\ldots,q^{-b_{m}}-1)
$$
as explained in \cite[Theorem~1]{KKIssues2018}.

As another example consider the Nuttall $Q$-function \cite{BrychkovITSF2014} which generalizes Marcum $Q$-function and plays a role in communication theory. It is defined by
\begin{equation}\label{eq:NuttallQ}
Q_{\mu,\nu}(a,b)=\int_{b}^{\infty}x^{\mu}e^{-(x^2+a^2)/2}I_{\nu}(ax)dx
\end{equation}
with  $a>0$, $b\ge0$, $\nu>-1$, $\mu>0$ and $I_{\nu}$ standing for the modified Bessel function of the first kind.  Fix $\nu_1>\nu_2$ and $a_1\le a_2$.  Using the standard power series expansion of $I_{\nu}$ \cite[p.217]{MaoTianCompRend2023} we have 
$$
\frac{I_{\nu_1}(a_1x)}{I_{\nu_2}(a_2x)}=2^{\nu_2-\nu_2}a_1^{\nu_1}a_2^{-\nu_2}x^{\nu_1-\nu_2}
\frac{\sum\nolimits_{k=0}^{\infty}a_1^{2k}x^{2k}/[4^kk!\Gamma(k+\nu_1+1)]}{\sum\nolimits_{k=0}^{\infty}a_2^{2k}x^{2k}/[4^kk!\Gamma(k+\nu_2+1)]}.
$$
First we will show that this ratio is a  unimodal function of $x$ on $(0,\infty)$ if $\nu_1-\nu_2=2\ell$, where $\ell\in\N$ (if $\ell=0$ this ratio is immediately seen to be decreasing).  Indeed, in this case the power series in the numerator becomes
$$
\sum\nolimits_{n=\ell}^{\infty}a_1^{2(n-\ell)}x^{2n}/[4^{n-\ell}(n-\ell)!\Gamma(n-\ell+\nu_1+1)]
$$
and the sequence of  ratios of the power series coefficients at $x^{2k}$ for $k=0,1,\ldots$ takes the form
$$
\underbrace{0,\ldots,0}_{\ell~\text{times}},\frac{4^{\ell}\ell!\Gamma(\ell+\nu_2+1)}{a_2^{2\ell}\Gamma(2\ell+\nu_2+1)},\ldots, 4^{\ell}a^{-2\ell}\Big(\frac{a_1}{a_2}\Big)^{2k}\frac{\Gamma(k+1)\Gamma(k+\nu_2+1)}{\Gamma(k-\ell+1)\Gamma(k+\ell+\nu_2+1)},\ldots
$$
It follows from the  $p=2$ case of the integral representation in  Example~\ref{ex:Gammaratio} (see also \cite[Introduction]{KPCMFT2016}) that
$$
k\to\frac{\Gamma(k+1)\Gamma(k+\nu_2+1)}{\Gamma(k-\ell+1)\Gamma(k+\ell+\nu_2+1)}
$$ 
is decreasing for $k\ge\ell$, so that the ratios of the coefficients at $x^{2k}$, $k=0,1,\ldots$ form a sequence which first increases and then decreases. 
By Theorem~\ref{th:seriesratio} or \cite[Corollary~2.3]{YangChuWangJMAA2015} we conclude that the same is true for the ratio $I_{\nu_1}(a_1x)/I_{\nu_2}(a_2x)$. Hence, considering the definition \eqref{eq:NuttallQ} as the (truncated) Mellin transform in the variable $\mu$ we get the following statement:
\begin{theorem}\label{th:NuttallRatio}
	Suppose $b\ge0$, $\nu_1-\nu_2=2\ell\in\N$ and $0<a_1\le a_2$.  Then the ratio
	$$
	\mu\to\frac{Q_{\mu,\nu_1}(a_1,b)}{Q_{\mu,\nu_2}(a_2,b)}
	$$
	is unimodal on $(0,\infty)$.  
\end{theorem}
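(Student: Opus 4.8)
The plan is to recognize $Q_{\mu,\nu}(a,b)$, for fixed $\nu,a,b$, as a truncated Mellin transform in the parameter $\mu$, and then to invoke Theorem~\ref{th:integralratio} with the power kernel of Example~\ref{ex:power}. Concretely, I would set $J=(b,\infty)\subseteq(0,\infty)$ and $w(x)=e^{-x^2/2}$, a positive weight on $J$, so that \eqref{eq:NuttallQ} becomes
$$
Q_{\mu,\nu_1}(a_1,b)=e^{-a_1^2/2}\int_J x^{\mu}I_{\nu_1}(a_1x)\,w(x)\,dx,\qquad
Q_{\mu,\nu_2}(a_2,b)=e^{-a_2^2/2}\int_J x^{\mu}I_{\nu_2}(a_2x)\,w(x)\,dx.
$$
Setting $A(x)=I_{\nu_1}(a_1x)$ and $B(x)=I_{\nu_2}(a_2x)$, one has $B(x)>0$ on $J$ because $\nu_2>-1$, as Theorem~\ref{th:integralratio} demands. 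The kernel $K(\mu,x)=x^{\mu}$ is $STP_{\infty}$ on $(0,\infty)\times J$ — in particular $SR_3$ with $\varepsilon_2=\varepsilon_3=1$ — this being the power kernel of Example~\ref{ex:power} with its two arguments interchanged, which leaves sign regularity intact. It then remains only to check the two hypotheses of Theorem~\ref{th:integralratio}: unimodality of $x\mapsto A(x)/B(x)$ on $J$, and uniform convergence of both integrals on compact subsets of $(0,\infty)$.

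The first of these is exactly what the discussion preceding the statement supplies: for $\nu_1-\nu_2=2\ell$ the quotient $I_{\nu_1}(a_1x)/I_{\nu_2}(a_2x)$ is unimodal — indeed first increasing and then decreasing — on $(0,\infty)$, obtained by applying Theorem~\ref{th:seriesratio} (or \cite[Corollary~2.3]{YangChuWangJMAA2015}) to the power series in $x^2$ whose sequence of coefficient ratios was shown to be unimodal. Since $A/B$ is a positive constant multiple of this quotient and the restriction of a unimodal function to a subinterval is again unimodal, $A(x)/B(x)$ is unimodal on $J$. The convergence hypothesis is routine: as $x\to\infty$ one has $I_{\nu_j}(a_jx)\sim (2\pi a_jx)^{-1/2}e^{a_jx}$, so each integrand is $O\!\big(x^{\mu-1/2}e^{a_jx-x^2/2}\big)$, which for $\mu$ in a fixed compact subinterval of $(0,\infty)$ is dominated by a single integrable function on $J$; near the left endpoint — relevant only when $b=0$ — integrability follows from $\mu>0$ and $I_{\nu_j}(a_jx)\sim c_jx^{\nu_j}$ with $\nu_j>-1$. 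Hence both integrals converge uniformly on compact subsets of $(0,\infty)$.

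With all hypotheses in place, Theorem~\ref{th:integralratio} gives that
$$
\mu\longmapsto\frac{\int_J x^{\mu}A(x)w(x)\,dx}{\int_J x^{\mu}B(x)w(x)\,dx}=e^{(a_1^2-a_2^2)/2}\,\frac{Q_{\mu,\nu_1}(a_1,b)}{Q_{\mu,\nu_2}(a_2,b)}
$$
is unimodal on $(0,\infty)$, and rescaling by the positive constant $e^{(a_2^2-a_1^2)/2}$ preserves this — which is the assertion. Since $\varepsilon_2\varepsilon_3=1$ for the power kernel, whenever the ratio fails to be monotone it actually inherits the ``first increasing, then decreasing'' pattern of the Bessel quotient. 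I do not expect a serious obstacle here: the substantive work — the unimodality of $I_{\nu_1}(a_1x)/I_{\nu_2}(a_2x)$ — has already been carried out before the statement, and what remains is the bookkeeping of choosing the right $SR_3$ kernel and verifying convergence; the one point to be careful about is that cutting the integration down to $(b,\infty)$ spoils neither the sign regularity of $x^{\mu}$ nor the unimodality of $A/B$, both being inherited by subintervals.
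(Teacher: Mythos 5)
Your proposal is correct and follows essentially the same route as the paper: the authors establish unimodality of $x\mapsto I_{\nu_1}(a_1x)/I_{\nu_2}(a_2x)$ via Theorem~\ref{th:seriesratio} applied to the power series in $x^2$, and then read off the theorem by viewing \eqref{eq:NuttallQ} as a truncated Mellin transform in $\mu$ and invoking Theorem~\ref{th:integralratio} with the $STP_{\infty}$ power kernel. Your write-up merely makes explicit the routine details (the weight $e^{-x^2/2}$, convergence, restriction to $(b,\infty)$) that the paper leaves implicit.
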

In view of the reduction formula \cite[p.39]{BrychkovITSF2014} 
$$
Q_{\mu,\nu}(a,0)=2^{(\mu-\nu-1)/2}a^{\nu}e^{-a^2/2}\frac{\Gamma((\mu+\nu+1)/2)}{\Gamma(\nu+1)}{_{1}F_{1}}\!\left(\begin{array}{c}(\mu+\nu+1)/2\\\nu+1\end{array}\vline\:\frac{a^2}{2}\right),
$$
we obtain the corresponding statement for the ratio of the Kummer functions. Furthermore, based on numerical evidence, we believe that the above  ratio of the modified Bessel functions is unimodal not only if $\nu_1-\nu_2=2\ell$ but for all $\nu_1\ge\nu_2>-1$. Notwithstanding the fact that the literature on the ratio of the Bessel functions is vast, we were unable to locate this property and leave it here as the following  conjecture: \\

\noindent\textbf{Conjecture~1.} Suppose $b\ge0$, $\nu_1\ge\nu_2>-1$ and $0<a_1\le a_2$.  Then the ratio
$$
x\to \frac{I_{\nu_1}(a_1x)}{I_{\nu_2}(a_2x)}
$$
is unimodal on $(0,\infty)$. Moreover, if  $\nu_1\ge\nu_2>0$ it is log-concave.

\paragraph{Acknowledgments.} We thank Apoorva Khare for sharing some of his insights regarding sign regularity which was very fruitful and helped to improve the paper.  We are further indebted to anonymous referee for spotting numerous inaccuracies in the first draft of this paper and various insightful suggestions.

\appendix

\setcounter{equation}{0}
\renewcommand{\theequation}{\Alph{section}.\arabic{equation}}

\section{Sign regularity of some $q$-Pochhammer kernels}\label{app:1}

Assume that $k\in\N$, $0<q<1$, and $x, y$ are indeterminates. Let $(x; q)_k$ be the $q$-Pochhammer symbol.  We will need the following easily verifiable identity
\begin{equation}\label{eq:qPochhammerRatio}
	\frac{(x; q)_{m}-(y; q)_{m}}{x-y}=-\sum\limits_{j=0}^{m-1} q^j (x; q)_{j}(y q^{j + 1}; q)_{m-1-j},
\end{equation}	
where $m \in \N_0$, and the straightforward expansion
\begin{equation}\label{eq:DetSum}
	\begin{vmatrix}
		\sum_{i=1}^{m}a_i & \sum_{i=1}^{m}b_i \\[10pt]
		\sum_{j=1}^{n}c_j & \sum_{j=1}^{n}d_j\\
	\end{vmatrix}=\sum_{i=1}^{m}\sum_{j=1}^{n}
	\begin{vmatrix}
		a_i & b_i \\
		c_j & d_j\\
	\end{vmatrix},
\end{equation}	
where $a_i, b_i, c_i, d_i\in\R$ and $m,n\in\N$ .  The proof of the following lemma  is inspired by a calculation in \cite[Theorem~3.2]{ChenLouck}. 

\begin{lemma}\label{lm:qfactorial}
	The kernel $K(\mu,n)=(q^{\mu};q)_n\in STP_3$ on $(0,\infty)\times\mathbb{N}_0$.
\end{lemma}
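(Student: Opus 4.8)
The plan is to verify directly, for $K(\mu,n)=(q^\mu;q)_n$ on $(0,\infty)\times\mathbb{N}_0$, the three families of strict determinantal inequalities that define $STP_3$: strict positivity of every $m\times m$ minor with $m\in\{1,2,3\}$, for arbitrary $\mu_1<\mu_2<\mu_3$ in $(0,\infty)$ and $0\le n_1<n_2<n_3$. The orders $m=1,2$ are immediate. For $m=1$, $(q^\mu;q)_n=\prod_{i=0}^{n-1}(1-q^{\mu+i})>0$ since $0<q^{\mu+i}<1$. For $m=2$, dividing the minor by the positive number $(q^{\mu_1};q)_{n_1}(q^{\mu_2};q)_{n_1}$ reduces the claim to strict monotonicity in $\mu$ of the ratio $(q^\mu;q)_{n_2}/(q^\mu;q)_{n_1}=\prod_{i=n_1}^{n_2-1}(1-q^{\mu+i})$, a product of positive factors each strictly increasing in $\mu$.

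All the substance is in the order-$3$ minor $D=\det\bigl((q^{\mu_i};q)_{n_j}\bigr)_{i,j=1}^{3}$. First I would subtract consecutive rows, leaving $D$ unchanged, so that the two lower rows become rows of differences $(q^{\mu_{i+1}};q)_{n_j}-(q^{\mu_i};q)_{n_j}$. Identity \eqref{eq:qPochhammerRatio}, applied with $x=q^{\mu_{i+1}}$, $y=q^{\mu_i}$, writes each such difference as $(q^{\mu_i}-q^{\mu_{i+1}})$ — a strictly \emph{positive} scalar, because $q<1$ forces $q^{\mu_{i+1}}<q^{\mu_i}$ — times a sum of products of $q$-Pochhammer symbols whose arguments all lie in $(0,1)$, hence termwise positive. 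Pulling out these positive scalars and iterating the divided-difference step once more (splitting the relevant product by the elementary ``add and subtract'' device and reapplying \eqref{eq:qPochhammerRatio} to the factor that changes) peels off the Vandermonde-type factors $q^{\mu_i}-q^{\mu_j}$ and leaves a $3\times3$ determinant whose entries are nonnegative sums of products of $q$-Pochhammer symbols. Expanding this residual determinant by multilinearity in its rows — which is precisely what the expansion \eqref{eq:DetSum} does in its $3\times3$ form — turns it into a sum of ``elementary'' determinants, each of which, after pulling out further positive scalars, is a generalized Vandermonde determinant that is $\ge0$, with at least one summand strictly positive because the $q^{\mu_i}$ are pairwise distinct. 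Summing, $D>0$.

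This is exactly the pattern of the computation in \cite[Theorem~3.2]{ChenLouck}, and the reason it works is transparent from the identity $(q^\mu;q)_n=(-1)^n q^{\binom n2}\prod_{j=0}^{n-1}(q^\mu-q^{-j})$: writing $z_i=q^{\mu_i}$ and $r_k=q^{-k}$, the matrix $\bigl((q^{\mu_i};q)_{n_j}\bigr)$ is, up to the column-dependent sign $(-1)^{n_j}$ and the column-dependent positive scalar $q^{\binom{n_j}2}$, a matrix of generalized powers $p_{n_j}(z_i)=\prod_{k=0}^{n_j-1}(z_i-r_k)$, whose determinant equals the ordinary Vandermonde $\prod_{i<j}(z_j-z_i)$ times a factorial Schur polynomial $s_\lambda$ with $\lambda_j=n_{4-j}-(3-j)$. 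Since every $z_i=q^{\mu_i}\in(0,1)$ lies strictly below every node $r_k=q^{-k}\ge1$, each monomial in the combinatorial (tableau) expansion of $s_\lambda$ is, box by box, a difference $z_c-r_k$ with $z_c<1\le r_k$, so all $|\lambda|$ factors are negative, every monomial has sign $(-1)^{|\lambda|}$, and the sum does not vanish; combining this with the Vandermonde sign $(-1)^{\binom32}$ (valid because $z_1>z_2>z_3$), the column signs, and $|\lambda|=\sum_k n_k-\binom32$, everything multiplies to $+1$, so $D>0$ once more. I expect the main obstacle to be precisely this sign bookkeeping — in the hands-on argument one must ensure that across all the divided-difference manipulations and the multilinear splitting \emph{every} surviving term carries the same sign, so that no cancellation spoils positivity — together with the non-vanishing needed for \emph{strict} total positivity, which ultimately reduces to the distinctness of the $q^{\mu_i}$ (equivalently, non-vanishing of the underlying Vandermonde / factorial Schur polynomial).
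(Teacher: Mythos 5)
Your proposal is correct in substance, but the part that actually carries the proof is a genuinely different route from the paper's. Your first argument (divided differences via \eqref{eq:qPochhammerRatio} followed by the multilinear expansion \eqref{eq:DetSum}) is the same strategy the paper executes, but as written it has a soft spot: after expanding, you assert that each ``elementary'' determinant is a nonnegative generalized Vandermonde determinant, which is precisely the point requiring proof. The paper avoids this by a carefully ordered sequence of column eliminations ($C_2-C_1$, $C_3-C_1$) and row eliminations exploiting $(q^{\mu};q)_s=(q^{\mu};q)_j(q^{\mu+j};q)_{s-j}$, which collapses the $3\times3$ determinant to $(q^{\mu_1};q)_i$ times a $2\times2$ determinant of sums; only then does \eqref{eq:DetSum} apply, and each resulting summand is, after extracting the positive factor $(q^{\mu_2}-q^{\mu_1})(q^{\mu_3}-q^{\mu_1})q^{i+j}$ and the positive Pochhammer weights, a second-order minor $\det\bigl((q^{\mu_r};q)_{m})\bigr)$ of the \emph{original} kernel with indices satisfying $i+k\le j-1<j+\ell$, hence positive by the already-established order-$2$ case. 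Your second argument, via $(q^{\mu};q)_n=(-1)^nq^{\binom n2}\prod_{j=0}^{n-1}(q^{\mu}-q^{-j})$ and the bialternant/tableau expansion of the factorial Schur polynomial of \cite{ChenLouck}, is a cleaner and more structural proof: since every $z_i=q^{\mu_i}\in(0,1)$ lies strictly below every node $q^{-k}\ge1$, all tableau monomials share the sign $(-1)^{|\lambda|}$ and the sum cannot vanish, and your sign count is consistent. This buys something the paper does not claim: the same argument works verbatim for minors of every order, giving $K(\mu,n)=(q^{\mu};q)_n\in STP_{\infty}$ rather than just $STP_3$, and it makes transparent \emph{why} the paper's hands-on computation succeeds (the paper itself acknowledges being inspired by \cite[Theorem~3.2]{ChenLouck}). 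If you intend the elementary argument to stand alone, you should either carry out the elimination in the paper's order so that the residual determinants are order-$2$ minors of the kernel itself, or simply lead with the Schur-function argument and supply the convention-checking for the tableau formula.
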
	

\begin{proof}  
	As $K(\mu,n)>0$ we need to verify the signs of the second and the third order determinants.  Verification for the second order determinants is rather straightforward and will be  omitted.  	Suppose $\mu_3 > \mu_2 > \mu_1 >  0$ are reals and $s > j > i \geq 1$ are positive integers. It remains to prove that 
	\begin{equation*}
		\begin{vmatrix}
			(q^{\mu_1}; q)_i & (q^{\mu_2}; q)_i & (q^{\mu_3}; q)_i\\
			(q^{\mu_1}; q)_j & (q^{\mu_2}; q)_j & (q^{\mu_3}; q)_j\\
			(q^{\mu_1}; q)_s & (q^{\mu_2}; q)_s & (q^{\mu_3}; q)_s
		\end{vmatrix}>0.
	\end{equation*}	
	Using rather standard notation for column and row manipulations (say $C_2-C_1\to C_2$ means that the second column should be replaced by the difference of the second and the first column which does not alter the value of the determinant), we compute 
	\begin{multline*}
		\begin{vmatrix}
			(q^{\mu_1}; q)_i & (q^{\mu_2}; q)_i & (q^{\mu_3}; q)_i\\
			(q^{\mu_1}; q)_j & (q^{\mu_2}; q)_j & (q^{\mu_3}; q)_j\\
			(q^{\mu_1}; q)_s & (q^{\mu_2}; q)_s & (q^{\mu_3}; q)_s
		\end{vmatrix}=\Big[\substack{C_2 - C_1 \to C_2 \\[5pt]C_3 - C_1 \to C_3}\Big]
		=\begin{vmatrix}
			(q^{\mu_1}; q)_i & (q^{\mu_2}; q)_i - (q^{\mu_1}; q)_i & (q^{\mu_3}; q)_i - (q^{\mu_1}; q)_i\\
			(q^{\mu_1}; q)_j & (q^{\mu_2}; q)_j - (q^{\mu_1}; q)_j & (q^{\mu_3}; q)_j - (q^{\mu_1}; q)_j\\
			(q^{\mu_1}; q)_s & (q^{\mu_2}; q)_s - (q^{\mu_1}; q)_s & (q^{\mu_3}; q)_s - (q^{\mu_1}; q)_s
		\end{vmatrix}
		\\[10pt]
		=\Big[\substack{R_3 - (q^{\mu_1 + j}; q)_{s - j} R_2 \to R_3}\Big]=\begin{vmatrix}
			(q^{\mu_1}; q)_i & (q^{\mu_2}; q)_i - (q^{\mu_1}; q)_i & (q^{\mu_3}; q)_i - (q^{\mu_1}; q)_i\\
			(q^{\mu_1}; q)_j &  (q^{\mu_2}; q)_j - (q^{\mu_1}; q)_j & (q^{\mu_3}; q)_j -  (q^{\mu_1}; q)_j \\
			0 & (q^{\mu_2}; q)_s - (q^{\mu_1 + j}; q)_{s - j} (q^{\mu_2}; q)_j & (q^{\mu_3}; q)_s - (q^{\mu_1 + j}; q)_{s - j} (q^{\mu_3}; q)_j
		\end{vmatrix}
		\\[10pt]
		=\Big[\substack{R_2 - (q^{\mu_1 + i}; q)_{j - i} R_1 \to R_2}\Big]
		=\begin{vmatrix}
			(q^{\mu_1}; q)_i & (q^{\mu_2}; q)_i - (q^{\mu_1}; q)_i & (q^{\mu_3}; q)_i - (q^{\mu_1}; q)_i\\
			0 &  (q^{\mu_2}; q)_j - (q^{\mu_1 + i}; q)_{j - i} (q^{\mu_2}; q)_i & (q^{\mu_3}; q)_j - (q^{\mu_1 + i}; q)_{j - i} (q^{\mu_3}; q)_i \\
			0 & (q^{\mu_2}; q)_s - (q^{\mu_1 + j}; q)_{s - j} (q^{\mu_2}; q)_j & (q^{\mu_3}; q)_s - (q^{\mu_1 + j}; q)_{s - j} (q^{\mu_3}; q)_j
		\end{vmatrix}
		\\[10pt]
		=(q^{\mu_1}; q)_i \begin{vmatrix}
			(q^{\mu_2}; q)_j - (q^{\mu_1 + i}; q)_{j - i} (q^{\mu_2}; q)_i  &(q^{\mu_3}; q)_j - (q^{\mu_1 + i}; q)_{j - i} (q^{\mu_3}; q)_i\\
			(q^{\mu_2}; q)_s - (q^{\mu_1 + j}; q)_{s - j} (q^{\mu_2}; q)_j & (q^{\mu_3}; q)_s - (q^{\mu_1 + j}; q)_{s - j} (q^{\mu_3}; q)_j
		\end{vmatrix}
\\[10pt]
		=(q^{\mu_1}; q)_i \begin{vmatrix}
			(q^{\mu_2}; q)_i\left[(q^{\mu_2 + i}; q)_{j - i}  - (q^{\mu_1 + i}; q)_{j - i} \right] & (q^{\mu_3}; q)_i\left[(q^{\mu_3 + i}; q)_{j - i}  - (q^{\mu_1 + i}; q)_{j - i} \right] \\
			(q^{\mu_2}; q)_j\left[(q^{\mu_2 + j}; q)_{s - j}  - (q^{\mu_1+ j}; q)_{s - j} \right]  & (q^{\mu_3}; q)_j\left[(q^{\mu_3 + j}; q)_{s - j}  - (q^{\mu_1 + j}; q)_{s - j} \right] 
		\end{vmatrix}
\end{multline*}
\begin{multline*}
		=\text{by \eqref{eq:qPochhammerRatio}} =(q^{\mu_1}; q)_i (q^{\mu_2} - q^{\mu_1}) (q^{\mu_3} - q^{\mu_1})q^i q^j \times
		\\[6pt]
		\begin{vmatrix}
			(q^{\mu_2}; q)_i \sum \limits_{k = 0}^{j - i - 1} q^k (q^{\mu_2 + i}; q)_{k} (q^{\mu_1 + i + k + 1}; q)_{j - i - 1 - k} & (q^{\mu_3}; q)_i \sum \limits_{k = 0}^{j - i - 1} q^k  (q^{\mu_3 + i}; q)_{k} (q^{\mu_1 + i + k + 1}; q)_{j - i - 1 - k}\\[14pt]
			(q^{\mu_2}; q)_j \sum \limits_{\ell = 0}^{s - j - 1} q^\ell (q^{\mu_2 + j}; q)_\ell (q^{\mu_1 + j + \ell + 1}; q)_{s - j - 1 - \ell} & (q^{\mu_3}; q)_j \sum \limits_{\ell = 0}^{s - j - 1} q^\ell (q^{\mu_3 + j}; q)_\ell (q^{\mu_1 + j + \ell + 1}; q)_{s - j - 1 - \ell}
		\end{vmatrix}
		\\[10pt]
		=\text{by \eqref{eq:DetSum}}=
		(q^{\mu_1}; q)_i (q^{\mu_2} - q^{\mu_1}) (q^{\mu_3} - q^{\mu_1}) q^i q^j \times
		\\[6pt]
		\sum \limits_{k = 0}^{j - i - 1} \sum \limits_{\ell = 0}^{s - j - 1}
		\begin{vmatrix}
			q^k (q^{\mu_2}; q)_{i} (q^{\mu_2 + i}; q)_{k} (q^{\mu_1 + i + k + 1}; q)_{j - i - 1 - k} &  q^k (q^{\mu_3}; q)_{i} (q^{\mu_3 + i}; q)_{k} (q^{\mu_1 + i + k + 1}; q)_{j - i - 1 - k}\\[8pt]
			q^\ell (q^{\mu_2}; q)_{ j} (q^{\mu_2 + j}; q)_\ell (q^{\mu_1 + j + \ell + 1}; q)_{s - j - 1 - \ell} & q^\ell (q^{\mu_3}; q)_j (q^{\mu_3 + j}; q)_\ell (q^{\mu_1 + j + \ell + 1}; q)_{s - j - 1 - \ell}
		\end{vmatrix}
		\\[10pt]
		=(q^{\mu_1}; q)_i (q^{\mu_2} - q^{\mu_1}) (q^{\mu_3} - q^{\mu_1}) q^i q^j \times
		\\
		\sum \limits_{k = 0}^{j - i - 1} \sum \limits_{\ell = 0}^{s - j - 1} q^{k + \ell}  (q^{\mu_1 + i + k + 1}; q)_{j - i - 1 - k}  (q^{\mu_1 + j + \ell + 1}; q)_{s - j - 1 - \ell}
		\begin{vmatrix}
			(q^{\mu_2}; q)_{i + k}  &  (q^{\mu_3}; q)_{i + k}\\[8pt]
			(q^{\mu_2}; q)_{j + \ell}  & (q^{\mu_3}; q)_{j + \ell}
		\end{vmatrix}.
	\end{multline*}
	As, clearly, $i+k\le j-1<j+\ell$ each determinant in the summand is positive. 
\end{proof}

\begin{lemma}\label{lm:invqfactorial}
		The kernel $K(\mu,n)=1/(q^{\mu};q)_n\in SSR_3$ on $(0,\infty)\times\mathbb{N}_0$ with the sign pattern $(+,-,-)$.
\end{lemma}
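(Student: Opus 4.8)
\section*{Proof proposal for Lemma~\ref{lm:invqfactorial}}

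The plan is to realise $K(\mu,n)=1/(q^{\mu};q)_n$ as a composition of two sign‑regular kernels and apply the basic composition formula \cite[Lemma~1.1, Chapter~3, \S1]{KarlinBook}. Positivity of $K$ on $(0,\infty)\times\N_0$ is immediate since $0<q^{\mu+j}<1$ for all $\mu>0$, $j\ge0$, so $\varepsilon_1=+1$. For the higher‑order determinants I would start from the $q$‑binomial theorem $(az;q)_\infty/(z;q)_\infty=\sum_{k\ge0}(a;q)_kz^k/(q;q)_k$ (valid for $|z|<1$): taking $a=q^n$, $z=q^{\mu}$ and using $(q^{\mu};q)_\infty=(q^{\mu};q)_n(q^{\mu+n};q)_\infty$ one gets
$$
K(\mu,n)=\frac{1}{(q^{\mu};q)_n}=\sum_{k=0}^{\infty}\frac{(q^{n};q)_k}{(q;q)_k}\,q^{\mu k}=\sum_{k=0}^{\infty}L(\mu,k)\,M(k,n),
$$
with $L(\mu,k)=q^{\mu k}$ on $(0,\infty)\times\N_0$ and $M(k,n)=(q^{n};q)_k/(q;q)_k$ on $\N_0\times\N_0$; the series converges since $k\mapsto M(k,n)$ is bounded and $\sum_kq^{\mu k}<\infty$.

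Next I would determine the signatures of the two factors. For $L$: for $\mu_1<\dots<\mu_m$ and $0\le k_1<\dots<k_m$ the determinant $\det\big((q^{\mu_i})^{k_j}\big)$ is a generalized polynomial Vandermonde determinant in the base points $q^{\mu_1}>\dots>q^{\mu_m}$; reversing the order of the rows makes the base points increasing and identifies the determinant with a product of a Schur polynomial and an ordinary Vandermonde, both strictly positive, so the original equals $(-1)^{\lfloor m/2\rfloor}(\text{positive})$. Hence $L$ is strictly sign regular with signature $(+,-,-,\dots)$ (equivalently, $q^{\mu k}=e^{-|\ln q|\,\mu k}$ and one may quote the signature $(+,-,-)$ of $e^{-xy}$ recorded in Example~\ref{ex:exp}). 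For $M$: scaling the $k$‑th row by $(q;q)_k>0$ alters no sign of a minor and reduces matters to $\widehat M(k,n)=(q^{n};q)_k$, whose restriction to $n\ge1$ coincides, up to transposition, with a submatrix of the $STP_3$ kernel of Lemma~\ref{lm:qfactorial}, while the $n=0$ column of $\widehat M$ equals $(1,0,0,\dots)^{\top}$; expanding along that column whenever present shows $M\in TP_3$ on $\N_0\times\N_0$ with signature $(+,+,+)$.

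By the composition formula $K\in SR_3$ with signature $\varepsilon^{K}_m=\varepsilon^{L}_m\varepsilon^{M}_m=(+,-,-)$. For the strict statement I would make the Cauchy--Binet expansion explicit: for $\mu_1<\mu_2<\mu_3$ and $n_1<n_2<n_3$,
$$
\det\big(K(\mu_i,n_j)\big)_{i,j=1}^{3}=\sum_{0\le k_1<k_2<k_3}\det\big(q^{\mu_i k_l}\big)_{i,l=1}^{3}\;\det\big(M(k_l,n_j)\big)_{l,j=1}^{3}.
$$
If $n_1\ge1$ every summand is (negative reversed Vandermonde)$\times$(positive $STP_3$ minor), hence strictly negative; if $n_1=0$ only the terms with $k_1=0$ survive, and there the second factor collapses to a rescaled $2\times2$ minor of $\widehat M$, strictly positive, while the first factor is again a strictly negative reversed Vandermonde. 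Since at least one summand is nonzero, the $3\times3$ determinant is strictly negative; the $2\times2$ case reduces to the elementary inequality $(q^{\mu_1+n_1};q)_{n_2-n_1}<(q^{\mu_2+n_1};q)_{n_2-n_1}$. This yields $K\in SSR_3$ on $(0,\infty)\times\N_0$ with signature $(+,-,-)$.

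The point demanding the most care is the sign bookkeeping: it is the decreasing arrangement $q^{\mu_1}>\dots>q^{\mu_m}$ that turns the otherwise totally positive exponential kernel $L$ into one with signature $(+,-,-)$, and one must ensure this sign is not cancelled on composing with the totally positive coefficient matrix $M$ --- which is exactly what the composition formula guarantees. A direct approach mimicking the proof of Lemma~\ref{lm:qfactorial} via \eqref{eq:qPochhammerRatio} is also conceivable, but expanding $\det(1/(q^{\mu_i};q)_{n_j})$ along its first column produces three $2\times2$ minors with alternating prefactors that do not obviously recombine, so the composition route is preferable.
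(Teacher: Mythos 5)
Your proposal is correct, and it takes a genuinely different route from the paper's. The paper proves the lemma by a direct determinant computation parallel to Lemma~\ref{lm:qfactorial}: the same column subtractions $C_2-C_1$, $C_3-C_1$ and row eliminations reduce the $3\times3$ determinant to $(q^{\mu_1};q)_i^{-1}$ times a $2\times2$ determinant, which is then expanded via the difference identity \eqref{eq:qPochhammerRatio} and formula \eqref{eq:DetSum} into an explicit double sum of $2\times2$ blocks $\Delta_{k\ell}$, each shown to be strictly negative because $\mu_2<\mu_3$. You instead factor the kernel through the $q$-binomial theorem as $\sum_{k\ge0}q^{\mu k}M(k,n)$ and apply Cauchy--Binet, so the signature $(+,-,-)$ is read off as the product of the signature of the reversed (Vandermonde-type) exponential kernel with the total positivity of the coefficient kernel $M$. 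Your route is shorter, explains structurally where the sign pattern comes from (it is inherited from $e^{-xy}$, exactly as in Example~\ref{ex:exp}), and would immediately upgrade the conclusion to $SSR_\infty$ with signature $(-1)^{\lfloor m/2\rfloor}$ if one knew $M\in TP_\infty$ (which would follow from an $STP_\infty$ strengthening of Lemma~\ref{lm:qfactorial}), essentially the discrete form of the paper's conjecture on $1/\Gamma_q$; the paper's route is self-contained and produces an explicit signed expansion without invoking the composition formula. Two details you should make explicit: when citing Lemma~\ref{lm:qfactorial} for minors of $\widehat M(k,n)=(q^{n};q)_k$ you also use it when one of the order indices $k_l$ equals $0$ (a row of ones), a case the published proof of that lemma does not write out (it assumes $s>j>i\ge1$) but which reduces to the $2\times2$ case after the same column subtractions; and the term-by-term Cauchy--Binet expansion of the infinite sum should be tied to the absolute convergence you already note, which follows from the uniform bound $M(k,n)\le 1/(q;q)_\infty$.
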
	
\begin{proof} 
		As $K(\mu,n)>0$ we need to verify the signs of the second and the third order determinants.  Verification for the second order determinants is rather straightforward and will be  omitted.  Suppose $\mu_3 > \mu_2 > \mu_1 >  0$ are reals and $s > j > i \geq 1$ are positive integers. It remains to prove that 
	\begin{equation} \label{EQ:3rdDet}
		\begin{vmatrix}
			(q^{\mu_1}; q)_i^{-1} & (q^{\mu_2}; q)_i^{-1} & (q^{\mu_3}; q)_i^{-1} \\
			(q^{\mu_1}; q)_j^{-1} & (q^{\mu_2}; q)_j^{-1} & (q^{\mu_3}; q)_j^{-1} \\
			(q^{\mu_1}; q)_s^{-1} & (q^{\mu_2}; q)_s^{-1} & (q^{\mu_3}; q)_s^{-1}
		\end{vmatrix} < 0.
	\end{equation}	
	We compute 
	\begin{multline} \label{EQ:Det1}
		\begin{vmatrix}
			(q^{\mu_1}; q)_i^{-1} & (q^{\mu_2}; q)_i^{-1} & (q^{\mu_3}; q)_i^{-1} \\
			(q^{\mu_1}; q)_j^{-1} & (q^{\mu_2}; q)_j^{-1} & (q^{\mu_3}; q)_j^{-1} \\
			(q^{\mu_1}; q)_s^{-1} & (q^{\mu_2}; q)_s^{-1} & (q^{\mu_3}; q)_s^{-1}
		\end{vmatrix} =\Big[\substack{C_2 - C_1 \to C_2 \\[5pt]C_3 - C_1 \to C_3}\Big]
		=\begin{vmatrix}
			(q^{\mu_1}; q)_i^{-1} & (q^{\mu_2}; q)_i^{-1} - (q^{\mu_1}; q)_i^{-1} & (q^{\mu_3}; q)_i^{-1} - (q^{\mu_1}; q)_i^{-1}\\
			(q^{\mu_1}; q)_j^{-1} & (q^{\mu_2}; q)_j^{-1} - (q^{\mu_1}; q)_j^{-1} & (q^{\mu_3}; q)_j^{-1} - (q^{\mu_1}; q)_j^{-1}\\
			(q^{\mu_1}; q)_s^{-1} & (q^{\mu_2}; q)_s^{-1} - (q^{\mu_1}; q)_s^{-1} & (q^{\mu_3}; q)_s^{-1} - (q^{\mu_1}; q)_s^{-1}
		\end{vmatrix}
		\\[10pt]
		=\Big[\substack{R_3 - (q^{\mu_1 + j}; q)_{s - j}^{-1} R_2 \to R_3}\Big]=\begin{vmatrix}
			(q^{\mu_1}; q)_i^{-1} & (q^{\mu_2}; q)_i^{-1} - (q^{\mu_1}; q)_i^{-1} & (q^{\mu_3}; q)_i^{-1} - (q^{\mu_1}; q)_i^{-1}\\
			(q^{\mu_1}; q)_j^{-1} & (q^{\mu_2}; q)_j^{-1} - (q^{\mu_1}; q)_j^{-1} & (q^{\mu_3}; q)_j^{-1} - (q^{\mu_1}; q)_j^{-1}\\
			0 & (q^{\mu_2}; q)_s^{-1} - (q^{\mu_1 + j}; q)_{s - j}^{-1} (q^{\mu_2}; q)_j^{-1} & (q^{\mu_3}; q)_s^{-1} - (q^{\mu_1 + j}; q)_{s - j}^{-1} (q^{\mu_3}; q)_j^{-1}
		\end{vmatrix}
		\\[10pt]
		=\Big[\substack{R_2 - (q^{\mu_1 + i}; q)_{j - i}^{-1} R_1 \to R_2}\Big]
		=\begin{vmatrix}
			(q^{\mu_1}; q)_i^{-1} & (q^{\mu_2}; q)_i^{-1} - (q^{\mu_1}; q)_i^{-1} & (q^{\mu_3}; q)_i^{-1} - (q^{\mu_1}; q)_i^{-1}\\
			0 &  (q^{\mu_2}; q)_j^{-1} - (q^{\mu_1 + i}; q)_{j - i}^{-1} (q^{\mu_2}; q)_i^{-1} & (q^{\mu_3}; q)_j^{-1} - (q^{\mu_1 + i}; q)_{j - i}^{-1} (q^{\mu_3}; q)_i^{-1} \\
			0 & (q^{\mu_2}; q)_s^{-1} - (q^{\mu_1 + j}; q)_{s - j}^{-1} (q^{\mu_2}; q)_j^{-1} & (q^{\mu_3}; q)_s^{-1} - (q^{\mu_1 + j}; q)_{s - j}^{-1} (q^{\mu_3}; q)_j^{-1}
		\end{vmatrix}
		\\[10pt]
		=(q^{\mu_1}; q)_i^{-1} \begin{vmatrix}
			(q^{\mu_2}; q)_j^{-1} - (q^{\mu_1 + i}; q)_{j - i}^{-1} (q^{\mu_2}; q)_i^{-1} & (q^{\mu_3}; q)_j^{-1} - (q^{\mu_1 + i}; q)_{j - i}^{-1} (q^{\mu_3}; q)_i^{-1} \\
			(q^{\mu_2}; q)_s^{-1} - (q^{\mu_1 + j}; q)_{s - j}^{-1} (q^{\mu_2}; q)_j^{-1} & (q^{\mu_3}; q)_s^{-1} - (q^{\mu_1 + j}; q)_{s - j}^{-1} (q^{\mu_3}; q)_j^{-1}
		\end{vmatrix}
		\\[10pt]
		=(q^{\mu_1}; q)_i^{-1} \begin{vmatrix}
			(q^{\mu_2}; q)_i^{-1} \left[(q^{\mu_2 + i}; q)_{j - i}^{-1}  - (q^{\mu_1 + i}; q)_{j - i}^{-1} \right] &(q^{\mu_3}; q)_i^{-1} \left[(q^{\mu_3 + i}; q)_{j - i}^{-1}  - (q^{\mu_1 + i}; q)_{j - i}^{-1} \right] \\
			(q^{\mu_2}; q)_j^{-1} \left[(q^{\mu_2 + j}; q)_{s - j}^{-1}  - (q^{\mu_1+ j}; q)_{s - j}^{-1} \right]  & (q^{\mu_3}; q)_j^{-1} \left[(q^{\mu_3 + j}; q)_{s - j}^{-1}  - (q^{\mu_1 + j}; q)_{s - j}^{-1} \right] 
		\end{vmatrix}
		\\[10pt]
		=(q^{\mu_1}; q)_i^{-1} \times 
		\begin{vmatrix}
			a & b \\
			c  & d
		\end{vmatrix},
	\end{multline}
	where 
	\begin{align*}
		a & =  (q^{\mu_2}; q)_i^{-1} (q^{\mu_2 + i}; q)_{j - i}^{-1} (q^{\mu_1 + i}; q)_{j- i}^{-1} \left[(q^{\mu_2 + i}; q)_{j - i}  - (q^{\mu_1 + i}; q)_{j - i} \right], \\
		b & = (q^{\mu_3}; q)_i^{-1} (q^{\mu_3 + i}; q)_{j - i}^{-1} (q^{\mu_1 + i}; q)_{j - i}^{-1} \left[(q^{\mu_3 + i}; q)_{j - i}  - (q^{\mu_1 + i}; q)_{j - i} \right], \\
		c & = (q^{\mu_2}; q)_j^{-1} (q^{\mu_2 + j}; q)_{s - j}^{-1} (q^{\mu_1 + j}; q)_{s - j}^{-1} \left[(q^{\mu_2 + j}; q)_{s - j}  - (q^{\mu_1+ j}; q)_{s - j} \right] , \\
		d & = (q^{\mu_3}; q)_j^{-1} (q^{\mu_3 + j}; q)_{s - j}^{-1} (q^{\mu_1 + j}; q)_{s - j}^{-1} \left[(q^{\mu_3 + j}; q)_{s - j}  - (q^{\mu_1 + j}; q)_{s - j} \right] .
	\end{align*}
	By~\eqref{eq:qPochhammerRatio} , we see that~\eqref{EQ:Det1} is equal to
	\begin{equation} \label{EQ:Det2}
		(q^{\mu_1}; q)_i^{-1} (q^{\mu_2} - q^{\mu_1}) (q^{\mu_3} - q^{\mu_1}) q^i q^j \times 
		\begin{vmatrix}
			u & v\\
			f & g
		\end{vmatrix},
	\end{equation}
	where 
	\begin{align*}
		u & = (q^{\mu_2}; q)_i^{-1} (q^{\mu_2 + i}; q)_{j - i}^{-1} (q^{\mu_1 + i}; q)_{j- i}^{-1} \sum \limits_{k = 0}^{j - i - 1} q^k (q^{\mu_2 + i}; q)_{k} (q^{\mu_1 + i + k + 1}; q)_{j - i - 1 - k}, \\
		v & = (q^{\mu_3}; q)_i^{-1} (q^{\mu_3 + i}; q)_{j - i}^{-1} (q^{\mu_1 + i}; q)_{j - i}^{-1} \sum \limits_{k = 0}^{j - i - 1} q^k  (q^{\mu_3 + i}; q)_{k} (q^{\mu_1 + i + k + 1}; q)_{j - i - 1 - k}, \\
		f & = (q^{\mu_2}; q)_j^{-1} (q^{\mu_2 + j}; q)_{s - j}^{-1} (q^{\mu_1 + j}; q)_{s - j}^{-1} \sum \limits_{\ell = 0}^{s - j - 1} q^\ell (q^{\mu_2 + j}; q)_\ell (q^{\mu_1 + j + \ell + 1}; q)_{s - j - 1 - \ell}, \\
		g & = (q^{\mu_3}; q)_j^{-1} (q^{\mu_3 + j}; q)_{s - j}^{-1} (q^{\mu_1 + j}; q)_{s - j}^{-1} \sum \limits_{\ell = 0}^{s - j - 1} q^\ell (q^{\mu_3 + j}; q)_\ell (q^{\mu_1 + j + \ell + 1}; q)_{s - j - 1 - \ell}.
	\end{align*}
	In view of $(q^{\mu_2};q)_i^{-1}(q^{\mu_2+i};q)_{j-i}^{-1}=(q^{\mu_2};q)_j^{-1}$ and $ (q^{\mu_3}; q)_i^{-1} (q^{\mu_3 + i}; q)_{j - i}^{-1}=(q^{\mu_3}; q)_j^{-1}$ and employing \eqref{eq:DetSum}, we conclude that that~\eqref{EQ:Det2} is equal to
	\begin{multline*}
		(q^{\mu_1}; q)_i^{-1} (q^{\mu_2}- q^{\mu_1})(q^{\mu_3}-q^{\mu_1})q^{i+j}
		(q^{\mu_1+i};q)_{j-i}^{-1}(q^{\mu_1+j};q)_{s-j}^{-1} (q^{\mu_2}; q)_j^{-1}(q^{\mu_3};q)_j^{-1}
		\\[6pt]
		\times\sum\limits_{k=0}^{j-i-1}\sum\limits_{\ell=0}^{s-j-1}q^{k+\ell}(q^{\mu_1+i+k+1}; q)_{j-i-1-k}  (q^{\mu_1+j+\ell+1};q)_{s-j-1-\ell} \Delta_{k\ell}, 
	\end{multline*}
	where
	\begin{align*}
	\Delta_{k\ell} & = 
		\begin{vmatrix}
			(q^{\mu_2 + i}; q)_{k}  &  (q^{\mu_3 + i}; q)_{k}\\[8pt]
			(q^{\mu_2 + j}; q)_{s - j}^{-1} (q^{\mu_2 + j}; q)_{\ell}  & (q^{\mu_3 + j}; q)_{s - j}^{-1} (q^{\mu_3 + j}; q)_{\ell}
		\end{vmatrix}\\[6pt]
		& = (q^{\mu_3 + j}; q)_{s - j}^{-1} (q^{\mu_2 + i}; q)_k (q^{\mu_3 + j}; q)_\ell - (q^{\mu_2 + j}; q)_{s - j}^{-1} (q^{\mu_3 + i}; q)_{k} (q^{\mu_2 + j}; q)_\ell \\[6pt]
		& = (q^{\mu_3 + j}; q)_{s - j}^{-1}  (q^{\mu_2 + j}; q)_{s - j}^{-1} \left[ (q^{\mu_2 + j}; q)_{s - j} (q^{\mu_2 + i}; q)_k (q^{\mu_3 + j}; q)_\ell - (q^{\mu_3 + j}; q)_{s - j} (q^{\mu_3 + i}; q)_{k} (q^{\mu_2 + j}; q)_\ell  \right] \\[6pt]
		& =  (q^{\mu_3 + j}; q)_{s - j}^{-1}  (q^{\mu_2 + j}; q)_{s - j}^{-1} (q^{\mu_3 + j}; q)_\ell (q^{\mu_2 + j}; q)_\ell 
		\\[6pt]
		& \times\left[ (q^{\mu_2 + j + \ell}; q)_{s - j - \ell} (q^{\mu_2 + i}; q)_k  - (q^{\mu_3 + j + \ell}; q)_{s - j - \ell} (q^{\mu_3 + i}; q)_{k}\right] < 0
	\end{align*}
	in view of $\mu_2<\mu_3$.  This proves \eqref{EQ:3rdDet}. 
\end{proof}

\setcounter{equation}{0}
\renewcommand{\theequation}{\Alph{section}.\arabic{equation}}

\section{Derivative asymptotics for factorial and inverse factorial series}\label{app:2}
\begin{lemma}\label{lm:factorialasymp}
	Suppose $b_k>0$ for all $k\ge0$ and 
	$$
	\frac{a_0}{b_0}\le \frac{a_1}{b_1}\le\cdots\le \frac{a_m}{b_m},~~~\frac{a_m}{b_m}\ge\frac{a_{m+1}}{b_{m+1}}\ge\frac{a_{m+2}}{b_{m+2}}\cdots 
	$$
	and at least one inequality in each chain is strict.   Define
	$$
	F(x)=\frac{\sum_{k=0}^{\infty}a_k(x)_k}{\sum_{k=0}^{\infty}b_k(x)_k}=\frac{A(x)}{B(x)},
	$$
	and assume that each series converges uniformly on all compact subsets of $\mathbb{R}$.  Then $F(x+1)-F(x)<0$ for all sufficiently large $x$.		
\end{lemma}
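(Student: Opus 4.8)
The plan is to reduce the statement to a sign assertion about a single double series and then exploit unimodality through an Abel-type summation. Write $d_k=a_k/b_k$; since the unimodal sequence $\{d_k\}$ is bounded, $|a_k|\le Mb_k$ for some $M$, all the series occurring below converge absolutely on $(0,\infty)$, and $B(x)=\sum_kb_k(x)_k>0$ there. Using $(x+1)_k=(x)_{k+1}/x$ together with the elementary identity $(x)_{p+1}(x)_q-(x)_p(x)_{q+1}=(p-q)(x)_p(x)_q$, and antisymmetrizing in $p,q$, one obtains
\[
F(x+1)-F(x)=\frac{N(x)}{x\,B(x)B(x+1)},\qquad N(x):=\sum_{0\le p<q}(q-p)\,b_pb_q\,(x)_p(x)_q\,(d_q-d_p),
\]
so that it suffices to show $N(x)<0$ for all large $x$.

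Next I would write $d_q-d_p=\sum_{k=p}^{q-1}(d_{k+1}-d_k)$ and interchange the order of summation to get
\[
N(x)=\sum_{k\ge0}(d_{k+1}-d_k)\,S_k(x),\qquad S_k(x):=\sum_{p=0}^{k}\sum_{q=k+1}^{\infty}(q-p)\,b_pb_q\,(x)_p(x)_q>0\quad(x>0).
\]
Unimodality gives $d_{k+1}-d_k\ge0$ for $k<m$ and $d_{k+1}-d_k\le0$ for $k\ge m$; the nontriviality of the increasing chain forces $m\ge1$ and $d_m-d_0>0$, while the strict-inequality hypothesis on the decreasing chain provides an index $m'\ge m$ with $d_{m'}>d_{m'+1}$. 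Thus $N(x)=\sum_{k<m}(d_{k+1}-d_k)S_k(x)-\sum_{k\ge m}(d_k-d_{k+1})S_k(x)$ with all displayed coefficients nonnegative.

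The structural key is the telescoping identity $S_k(x)-S_{k-1}(x)=b_k(x)_k\bigl(W(x)-kB(x)\bigr)$, where $W(x):=\sum_{j\ge0}jb_j(x)_j$ (which converges, being dominated by $xB(x+1)$); this I would verify by a direct rearrangement. Because $(x)_j/(x)_J\to0$ as $x\to+\infty$ whenever $j<J$, the ``average index'' $W(x)/B(x)$ tends to $+\infty$, so for every fixed $K$ one has $S_0(x)<S_1(x)<\dots<S_K(x)$ once $x$ is large. Taking $K=m'$, this yields for large $x$
\[
N(x)\le(d_m-d_0)\,S_{m-1}(x)-(d_{m'}-d_{m'+1})\,S_{m'}(x)=S_{m-1}(x)\Bigl[(d_m-d_0)-(d_{m'}-d_{m'+1})\tfrac{S_{m'}(x)}{S_{m-1}(x)}\Bigr].
\]

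Finally I would establish $S_{m'}(x)/S_{m-1}(x)\to+\infty$: summing the telescoping identity, $S_{m'}(x)-S_{m-1}(x)=B(x)\sum_{k=m}^{m'}b_k(x)_k\bigl(W(x)/B(x)-k\bigr)\ge\tfrac12 b_{m'}W(x)x^{m'}$ for large $x$, whereas $S_{m-1}(x)\le\bigl(\sum_{p\le m-1}b_p(x)_p\bigr)W(x)=O(x^{m-1}W(x))$ since $\sum_{p\le m-1}b_p(x)_p$ is a polynomial of degree $m-1$; dividing gives $S_{m'}(x)/S_{m-1}(x)\ge1+c\,x^{m'-m+1}$ for some $c>0$. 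As $d_{m'}-d_{m'+1}>0$ and $S_{m-1}(x)>0$, the bracketed quantity above tends to $-\infty$, hence $N(x)<0$, and therefore $F(x+1)-F(x)<0$ for all sufficiently large $x$. I expect the main obstacle to be precisely this last comparison — turning the heuristic ``the highest-index term dominates'' into the two honest bounds on $S_{m'}$ and $S_{m-1}$; everything preceding it is formal manipulation of series with positive terms.
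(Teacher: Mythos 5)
Your argument is correct, and it takes a genuinely different route from the paper's. Both proofs start identically: the reduction to the sign of the antisymmetrized double sum $N(x)=\sum_{p<q}(q-p)b_pb_q(x)_p(x)_q(d_q-d_p)$ (the paper's $S(x)/2$) is exactly the computation in the paper's Appendix B. After that the paper splits the symmetric sum into blocks according to whether each index is $<m$ or $\ge m$, normalizes by $(x)_m^2$, and argues qualitatively that the first block vanishes while the bracketed expression attached to each $k\ge m$ is eventually negative because one inner sum tends to $0$ and the other to $+\infty$. Your Abel-summation decomposition $N(x)=\sum_k(d_{k+1}-d_k)S_k(x)$, combined with the telescoping identity $S_k-S_{k-1}=b_k(x)_k\bigl(W(x)-kB(x)\bigr)$ and the explicit growth comparison $S_{m'}/S_{m-1}\ge 1+c\,x^{m'-m+1}$, replaces that qualitative step by two honest inequalities; it isolates exactly where unimodality enters (the signs of the increments $d_{k+1}-d_k$) and where the strict inequalities are used (to guarantee $d_m-d_0>0$ and $d_{m'}-d_{m'+1}>0$), and it avoids the uniformity-in-$k$ issue that the paper's ``eventually negative bracket'' argument leaves implicit. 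The one claim you should patch is ``the unimodal sequence $\{d_k\}$ is bounded'': unimodality only bounds $d_k$ from above, so $|a_k|\le Mb_k$ is not automatic. This does not damage the proof — the hypothesis that $\sum a_k(x)_k$ converges for \emph{all} real $x$ already yields absolute convergence and $\sum_k k|a_k|(x)_k<\infty$ for $x>0$ (compare the $k$-th term at $x$ with the same term at $x+3$, which gives a factor $O(k^{-3})$) — but the justification of the rearrangements should be routed through that observation rather than through boundedness of $d_k$. The paper is equally silent on these convergence points, so this is a shared, repairable technicality rather than a gap specific to your argument.
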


\begin{proof}
	We have 
	$$
	F(x+1)-F(x)=\frac{A(x+1)B(x)-A(x)B(x+1)}{B(x)B(x+1)}.
	$$	
	In view of $(x+1)_k=(x)_{k}(x+k)/x$, we obtain
	\begin{multline*}
		A(x+1)B(x)-A(x)B(x+1)=\frac{1}{x}\sum_{k=0}^{\infty}a_k(x)_{k}(x+k)\sum_{n=0}^{\infty}b_n(x)_n-\frac{1}{x}\sum_{k=0}^{\infty}a_k(x)_k\sum_{n=0}^{\infty}b_n(x)_{n}(x+n)
		\\
		=\frac{1}{x}\sum_{k,n=0}^{\infty}a_kb_n(x)_{k}(x)_{n}(k-n)=-\frac{1}{x}\sum_{k,n=0}^{\infty}a_nb_k(x)_{k}(x)_{n}(k-n)
		\\
		=\frac{1}{2x}\sum_{k,n=0}^{\infty}b_kb_n(x)_{k}(x)_{n}(k-n)\bigg(\frac{a_k}{b_k}-\frac{a_n}{b_n}\bigg)
		=\frac{S(x)}{2x},
	\end{multline*}	
	so that 
	$$
	F(x+1)-F(x)=\frac{S(x)}{2xB(x)B(x+1)}
	$$	
	and the sign of the left hand side coincides with that of $S(x)$. Decomposing yields:
	\begin{multline*}
		S(x)=\sum_{k,n=0}^{m-1}b_kb_n(x)_{k}(x)_{n}(k-n)\bigg(\frac{a_k}{b_k}-\frac{a_n}{b_n}\bigg)+
		\sum_{n=0}^{m-1}\sum_{k=m}^{\infty}b_kb_n(x)_{k}(x)_{n}(k-n)\bigg(\frac{a_k}{b_k}-\frac{a_n}{b_n}\bigg)
		\\
		+\sum_{k=0}^{m-1}\sum_{n=m}^{\infty}b_kb_n(x)_{k}(x)_{n}(k-n)\bigg(\frac{a_k}{b_k}-\frac{a_n}{b_n}\bigg)
		+\sum_{k=m}^{\infty}\sum_{n=m}^{\infty}b_kb_n(x)_{k}(x)_{n}(k-n)\bigg(\frac{a_k}{b_k}-\frac{a_n}{b_n}\bigg)
		\\
		=\sum_{k,n=0}^{m-1}b_kb_n(x)_{k}(x)_{n}(k-n)\bigg(\frac{a_k}{b_k}-\frac{a_n}{b_n}\bigg)+
		2\sum_{k=m}^{\infty}b_k(x)_{k}\sum_{n=0}^{m-1}b_n(x)_{n}(k-n)\bigg(\frac{a_k}{b_k}-\frac{a_n}{b_n}\bigg)
		\\
		+\sum_{k=m}^{\infty}b_k(x)_{k}\sum_{n=m}^{\infty}b_n(x)_{n}(k-n)\bigg(\frac{a_k}{b_k}-\frac{a_n}{b_n}\bigg)
		=\sum_{k,n=0}^{m-1}b_kb_n(x)_{k}(x)_{n}(k-n)\bigg(\frac{a_k}{b_k}-\frac{a_n}{b_n}\bigg)
		\\
		+
		\sum_{k=m}^{\infty}b_k(x)_{k}
		\Bigg[
		\sum_{n=0}^{m-1}2b_n(x)_{n}(k-n)\bigg(\frac{a_k}{b_k}-\frac{a_n}{b_n}\bigg)
		-\sum_{n=m}^{\infty}b_n(x)_{n}(n-k)\bigg(\frac{a_k}{b_k}-\frac{a_n}{b_n}\bigg)\Bigg].
	\end{multline*}	
	From the last expression we get
	\begin{multline*}
		\frac{S(x)}{[(x)_{m}]^2}=\sum_{k,n=0}^{m-1}b_kb_n\frac{(x)_{k}(x)_{n}}{(x)_{m}(x)_{m}}(k-n)\bigg(\frac{a_k}{b_k}-\frac{a_n}{b_n}\bigg)
		\\
		+
		\sum_{k=m}^{\infty}b_k\frac{(x)_{k}}{(x)_{m}}
		\Bigg[
		\sum_{n=0}^{m-1}2b_n\frac{(x)_{n}}{(x)_m}(k-n)\bigg(\frac{a_k}{b_k}-\frac{a_n}{b_n}\bigg)
		-\sum_{n=m}^{\infty}b_n\frac{(x)_{n}}{(x)_m}(n-k)\bigg(\frac{a_k}{b_k}-\frac{a_n}{b_n}\bigg)\Bigg].
	\end{multline*}	
	Clearly, we can make the first sum as small as we wish  by making $x$ sufficiently large.  Next, the  expression in brackets in the second sum is eventually negative (for large $x$) , since the first sum tends to zero while the second sum is positive for all $x>0$ and is monotonically increasing, because for some (at least one) $n>m$ the coefficient at $(x)_n/(x)_m$ is strictly positive, so that the second sum in brackets tends to $+\infty$ as $x$ increases.  This implies that $S(x)$ is negative for sufficiently large $x$ which establishes our claim.
\end{proof}

\begin{lemma}\label{lm:invfactorialzero}
	Define
	$$
	F(x)=\frac{\sum_{k=0}^\infty {a_k}/(x)_k}{\sum_{k=0}^\infty {b_k}/(x)_k}.
	$$
	and assume that each series converges uniformly on all compact subsets of $(0,\infty)$.  Then 
	\begin{equation}\label{eq:invfactorialzero}
	\Bigg(\sum_{k=1}^{\infty}\frac{b_k}{(k-1)!}\Bigg)^{2}F'(0+)=\sum_{k=1}^{\infty}\frac{b_0b_k}{(k-1)!}\Big(\frac{a_0}{b_0}-\frac{a_k}{b_k}\Big)+
	\sum_{k=1}^{\infty}\sum_{j=1}^{k-1}\frac{b_kb_j(H_{j-1}-H_{k-1})}{(k-1)!(j-1)!}\Big(\frac{a_k}{b_k}-\frac{a_j}{b_j}\Big),
	\end{equation}
	where $H_n$ denotes the $n$-th harmonic number.
\end{lemma}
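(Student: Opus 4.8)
The plan is to clear the pole of the two series at $x=0$, reduce $F'(0+)$ to a quotient of Taylor data at the origin, and then rearrange. For $k\ge1$ one has the elementary identity $x/(x)_k=1/(x+1)_{k-1}$, so putting
$$
\widetilde A(x):=x\sum_{k=0}^{\infty}\frac{a_k}{(x)_k}=a_0x+\sum_{k=1}^{\infty}\frac{a_k}{(x+1)_{k-1}},\qquad \widetilde B(x):=x\sum_{k=0}^{\infty}\frac{b_k}{(x)_k}=b_0x+\sum_{k=1}^{\infty}\frac{b_k}{(x+1)_{k-1}},
$$
every summand is now real-analytic near $x=0$ and $F(x)=\widetilde A(x)/\widetilde B(x)$ for $x>0$. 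Once we know that $\widetilde A,\widetilde B$ extend to $C^1$ functions on some $[0,\delta)$ and that $\widetilde B(0)\neq0$, the quotient rule gives $F'(0+)=\bigl(\widetilde A'(0)\widetilde B(0)-\widetilde A(0)\widetilde B'(0)\bigr)/\widetilde B(0)^2$, so everything reduces to computing the four numbers $\widetilde A(0),\widetilde A'(0),\widetilde B(0),\widetilde B'(0)$ and simplifying.

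Next I would evaluate these term by term. Since $(x+1)_{k-1}|_{x=0}=(k-1)!$ and logarithmic differentiation gives $\frac{d}{dx}\log (x+1)_{k-1}=\sum_{j=1}^{k-1}(x+j)^{-1}$, which equals $H_{k-1}$ at $x=0$ (with $H_0=0$), we get $\frac{d}{dx}(x+1)_{k-1}^{-1}|_{x=0}=-H_{k-1}/(k-1)!$. Hence $\widetilde A(0)=\sum_{k\ge1}a_k/(k-1)!=:\alpha$, $\widetilde A'(0)=a_0-\sum_{k\ge1}a_kH_{k-1}/(k-1)!=:\alpha'$, and likewise $\widetilde B(0)=\sum_{k\ge1}b_k/(k-1)!=:\beta$ and $\widetilde B'(0)=b_0-\sum_{k\ge1}b_kH_{k-1}/(k-1)!=:\beta'$; here $\beta>0$ since $b_k>0$, so $\widetilde B(0)\neq0$. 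Thus $\beta^2F'(0+)=\alpha'\beta-\alpha\beta'$, and multiplying out the products leaves
$$
\beta^2F'(0+)=\sum_{k\ge1}\frac{a_0b_k-b_0a_k}{(k-1)!}+\sum_{k,j\ge1}\frac{a_kb_j(H_{j-1}-H_{k-1})}{(k-1)!\,(j-1)!}.
$$
The single sum is already $\sum_{k\ge1}\frac{b_0b_k}{(k-1)!}\bigl(\frac{a_0}{b_0}-\frac{a_k}{b_k}\bigr)$. For the double sum, interchanging the names of $k$ and $j$ negates the summand, so averaging it with its reflected copy replaces $a_kb_j$ by $\frac12(a_kb_j-a_jb_k)$; the resulting summand $\frac{(a_kb_j-a_jb_k)(H_{j-1}-H_{k-1})}{2(k-1)!(j-1)!}$ is symmetric in $(k,j)$ and vanishes on the diagonal, so the double sum collapses to $\sum_{k\ge2}\sum_{j=1}^{k-1}\frac{(a_kb_j-a_jb_k)(H_{j-1}-H_{k-1})}{(k-1)!(j-1)!}$. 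Writing $a_kb_j-a_jb_k=b_kb_j(a_k/b_k-a_j/b_j)$ turns this into the double sum of \eqref{eq:invfactorialzero} (the $k=1$ term being empty), completing the identification.

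The only non-routine points are the two analytic interchanges. For the first — that $\widetilde A,\widetilde B\in C^1([0,\delta))$, so that termwise evaluation and differentiation are legitimate — it suffices to dominate the formal derivative series: for $x\ge0$ one has $(x+1)_{k-1}\ge(k-1)!$ and $\sum_{j=1}^{k-1}(x+j)^{-1}\le H_{k-1}$, hence $\bigl|a_k\frac{d}{dx}(x+1)_{k-1}^{-1}\bigr|\le |a_k|H_{k-1}/(k-1)!$ on $[0,\infty)$, so by the Weierstrass $M$-test uniform convergence (and termwise differentiability) follows from $\sum_k|a_k|H_{k-1}/(k-1)!<\infty$ and its analogue for $b_k$, a condition already implicit in the very meaningfulness of \eqref{eq:invfactorialzero}; Tonelli's theorem then licenses the rearrangement of the iterated double series in the second step. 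I expect this bookkeeping — together with pinning down exactly which summability hypotheses are needed — to be the main, and really the only, obstacle; the symmetrization identity and the remaining algebra are routine.
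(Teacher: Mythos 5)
Your proof is correct and is essentially the paper's own argument in a slightly different dress: the paper differentiates $1/(x)_k$ via the digamma function and multiplies $B^2F'$ by $x^2$ before letting $x\to0+$, which produces exactly your intermediate identity $\beta^2F'(0+)=\sum_k(a_0b_k-b_0a_k)/(k-1)!+\sum_{k,j\ge1}a_kb_j(H_{j-1}-H_{k-1})/[(k-1)!(j-1)!]$, followed by the same antisymmetrization. Your variant of clearing the pole first (via $x/(x)_k=1/(x+1)_{k-1}$) and your explicit $M$-test justification of the termwise limits are, if anything, a bit cleaner than the paper's, but the route is the same.
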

\begin{proof}
	In in view of the elementary differentiation formula 
	$$
	\frac{d}{dx}\frac{1}{(x)_k}=\frac{\psi(x)-\psi(x+k)}{(x)_k},
	$$
	where $\psi(z)=\Gamma'(z)/\Gamma(z)$ stands for digamma function, we have:
	$$
	x^2\Bigg(\sum_{k=1}^{\infty}\frac{b_k}{(x)_k}\Bigg)^{2}F'(x)
	=\sum\limits_{k,n=0}^{\infty}b_ka_n\frac{(\psi(x+k)-\psi(x+n))x^2}{(x)_k(x)_n}.
	$$
	As, clearly, $(x)_k/x=(x+1)_{k-1}\to(k-1)!$ and $x(\psi(x+n)-\psi(x))\to1$ as $x\to$ for $n=1,2,\ldots$, by letting $x\to0$ we obtain
	$$
	\Bigg(\sum_{k=1}^{\infty}\frac{b_k}{(k-1)!}\Bigg)^{2}F'(0+)=a_0\sum_{k=1}^{\infty}\frac{b_k}{(k-1)!}
	-b_0\sum_{k=1}^{\infty}\frac{a_k}{(k-1)!}+\sum\limits_{k,n=1}^{\infty}b_ka_n\frac{(\psi(k)-\psi(n))}{(k-1)!(n-1)!}.
	$$
	Finally using $\psi(n)=H_{n-1}-\gamma$, where $H_n$ denotes the $n$-th harmonic number and $\gamma$ is Euler-Mascheroni constant, we arrive at \eqref{eq:invfactorialzero}.
\end{proof}


\begin{thebibliography}{99}
		

		
	\bibitem{BVV2011}\'{A}.\:Baricz, J.\:Vesti, M. Vuorinen, On Kaluza's sign criterion for reciprocal power series, Ann. Univ. Maria Curie-Sk{\l}odowska Sect. A 65(2) (2011),  1--16.  \url{http://dx.doi.org/10.2478/v10062-011-0009-2}
	
	
	\bibitem{BOR2007}F.\:Belzunce, E.-M.\:Ortega, J.M.\: Ruiz, On non-monotonic aging properties from the Laplace transform, with actuarial applications, Insurance Math. Econom. 40(1) (2007) 1--14. \url{https://doi.org/10.1016/j.insmatheco.2006.01.010}
	
		\bibitem{BK} M.\:Biernacki, J.\:Krzy\.{z}, 
	On the monotonicity of certain functionals in the theory of analytic functions, Ann. Univ. Maria Curie-Sk{\l}odowska Sect. A 9 (1955) 135--147.
	
	\bibitem{BrychkovITSF2014} Yu.A.\:Brychkov,  On some properties of the Nuttall function $Q_{\mu,\nu}(a, b)$. Integr. Transf. Spec Funct. 25:1 (2014), 34--43. \url{https://doi.org/10.1080/10652469.2013.812172}
	
	\bibitem{BuchstaberGlutsyuk2019}	V.M.\:Buchstaber, A.A.\:Glutsyuk, Total positivity, Grassmannian and modified Bessel functions, Functional Analysis and Geometry: Selim Grigorievich Krein Centennial, Contemp. Math., 733, AMS, 2019, 97--107.
	\url{https://doi.org/10.1090/conm/733/14736}
			
	\bibitem{CG83} B.C.\:Carlson and J.L.\:Gustafson, Total Positivity of Mean Values
		and Hypergeometric Functions, SIAM J. Math. Anal. Vol. 14, No. 2, 1983. \url{https://doi.org/10.1137/0514030}
		
\bibitem{ChenLouck}W.Y.C.\:Chen and J.D.\:Louck, The factorial Schur function, J. Math. Phys. 34(1993), 4144.	\url{https://doi.org/10.1063/1.530032}	


\bibitem{ChoiSmith2017}M.\:Choi, L.\:Smith, Ordinal aggregation results via Karlin’s variation diminishing property, Journal of Economic Theory 168(2017), 1--11. \url{https://doi.org/10.1016/j.jet.2016.12.001}


\bibitem{DOP2019}J.\:Delgado, H.\:Orera, J.M.\:Pe\~na, Accurate computations with Laguerre matrices, Numer Linear Algebra Appl. 2019,   26:e2217.
\url{https://doi.org/10.1002/nla.2217}

\bibitem{Derbazi20205} Z.\:Derbazi, On Preserving or Reversing Higher-Order Unimodality and Convexity by Sign-Regular Kernels, preprint, arXiv:2502.13136, 20225. \url{https://arxiv.org/abs/2502.13136}


			
	\bibitem{EstradaPavlovic} R.\:Estrada and M.\:Pavlovi\'{c}, L’H\^{o}pital's monotone rule, Gromov’s theorem, and  operations that preserve the monotonicity of quotients,
		Publications de L’institut Math\'{e}matique Nouvelle s\'{e}rie, Tome 101(115) (2017), 11--24 
		\url{https://doi.org/10.2298/PIM1715011E}
		
		
\bibitem{FJS2017} S.\:Fallat, Ch.R.\:Johnson and 
A.D.\: Sokal, Total positivity of sums, Hadamard products and Hadamard powers: Results and counterexamples, Linear Algebra and its Applications, 520(2017), 242--259.
\url{https://doi.org/10.1016/j.laa.2017.01.013}


		
		\bibitem{Ferreira-Simon2023} R.A.C.\:Ferreira and Th.\:Simon,  Convolution of beta prime distribution, Trans. Amer. Math. Soc. 376 (2023), 855--890.\url{https://doi.org/10.1090/tran/8748}
		
	\bibitem{GPSR2020} P.\:Grover, V.S.\:Panwar, A.\:Satyanarayana Reddy, Positivity properties of some special matrices, Linear Algebra Its Appl., 596 (2020), 203--215. \url{https://doi.org/10.1016/j.laa.2020.03.008}

	\bibitem{HVV2009} V.\:Heikkala, M.K.\:Vamanamurthy, M.\:Vuorinen, Generalized elliptic integrals, Comput. Methods Funct. Theory 9 (1) (2009), 75--109. \url{https://doi.org/10.1007/BF03321716}

\bibitem{IsmailMuldoon2013}  M.E.H.\:Ismail and M.\:Muldoon, Higher Monotonicity Properties of
$q$-gamma and $q$-psi Functions, Advances in Dynamical Systems and Applications, Volume 8, Number 2(2013),  247--259.  \url{https://campus.mst.edu/adsa/index_files/adsa82.htm}

\bibitem{Jameson2021} G.J.O Jameson, Monotonic ratios of functions, Math. Gaz., volume 105, issue 562  (2021), 129--134.  \url{https://doi.org/10.1017/mag.2021.22}
		
		
		\bibitem{KacCheung}V.\:Kac, P.\:Cheung, Quantum Calculus, Springer, 2002. \url{https://doi.org/10.1007/978-1-4613-0071-7}
		
		\bibitem{KalmykovKarp2017}S.I.\:Kalmykov and D.B.\:Karp, Log-concavity and Tur\'{a}n type inequalities for the generalized hypergeometric function, Anal. Math., Volume 43, Issue 4(2017), 567--580.
		\url{https://doi.org/10.1007/s10476-017-0503-z}
		
		\bibitem{KKIssues2018}S.I.\:Kalmykov and D.B.\:Karp, Inequalities for some basic hypergeometric functions, Issues of Analysis (Problemy Analiza), Volume 8(26), No.1, 2019, 47--64. \url{http://dx.doi.org/10.15393/j3.art.2019.5210}
		
		\bibitem{KarlinBook} S.\:Karlin,  Total Positivity, Vol. I, Stanford University
		Press, California 1968.
		
		\bibitem{KPCMFT2016}  D.\:Karp and E.\:Prilepkina, Completely monotonic gamma ratio and infinitely divisible $H$-function of Fox, Comput. Methods Funct. Theory., 16(2016), 135--153.  	\url{https://dx.doi.org/10.1007/s40315-015-0128-9}
		
		\bibitem{KSJMAA2010} D.\:Karp and S.M.\:Sitnik, Log-convexity and log-concavity of hypergeometric-like functions, J. Math. Anal. Appl., 364 (2010), 384--394.
		\url{http://dx.doi.org/10.1016/j.jmaa.2009.10.057}
		
		
		\bibitem{KPMathScan2009} S.\:Koumandos and H.L.\:Pedersen, On the asymptotic expansion of the logarithm of Barnes triple gamma function, Math. Scand. 105 (2009), no. 2, 287--306. \url{https://doi.org/10.7146/math.scand.a-15119}
		
		
		
		
		\bibitem{MaoDuTian2023} Zh.-X.\:Mao, X.-Y.\:Du and J.-F.\:Tian, Some monotonicity rules for quotient of integrals on time scales, arXiv:2312.10252v1, December 2023. \url{http://dx.doi.org/10.48550/arXiv.2312.10252}
		
		\bibitem{MoaTianAMS2024} Zh.-X.\:Mao and J.-F.\:Tian, Monotonicity rules for the ratio of two function series and two integral transforms, Proc. Amer. Math. Soc. 152 (2024), 2511--2527. \url{https://doi.org/10.1090/proc/16728}
		
		\bibitem{MoaTianArxiv2024} Zh.-X.\:Mao and J.-F.\:Tian, Monotonicity rules for the ratio of power series,  preprint 2024,	arXiv:2404.18168, \url{https://arxiv.org/abs/2404.18168}
		
		\bibitem{MaoTianCompRend2023} Zh.-X.\:Mao and J.-F.\:Tian, Monotonicity and complete monotonicity of some functions involving the modified Bessel functions of the second kind,  C. R. Math., Volume 361 (2023), 217--235. \url{https://doi.org/10.5802/crmath.399}
		
		
	\bibitem{MaoTianBMMS2024}Zh.-X.\:Mao and J.-F.\:Tian, Delta L’Hospital-, Laplace- and Variable Limit-Type Monotonicity Rules on Time Scales,  	Bull. Malays. Math. Sci. Soc., 	Volume 47, article number 1, (2024).
	\url{https://doi.org/10.1007/s40840-023-01599-8}
		
			
		\bibitem{Qi2022}F.\:Qi, Decreasing properties of two ratios defined by three and four
		polygamma functions, C. R. Math., Volume 360 (2022), 89--101. \url{https://doi.org/10.5802/crmath.296}
			
				
	\bibitem{Pinelis2001}I.\:Pinelis, L’H\^{o}spital type rules for oscillation, with Applications,  J. Inequal. Pure Appl. Math., Volume 2, Issue 3, Article 33, 2001. \url{https://www.emis.de/journals/JIPAM/article149.html?sid=149}
		
		\bibitem{PS} G.\:P\'{o}lya, G.\:Szeg\"{o},  Problems and Theorems in Analysis II, Springer 
		Science and Business Media, Mathematics, 1997.
		
		
			\bibitem{Richards1990}D.St.P.\:Richards, Totally Positive Kernels, P\'{o}lya Frequency Functions,
		and Generalized Hypergeometric Series, Linear Algebra Its Appl., 137/138 (1990), 467--478. \url{https://doi.org/10.1016/0024-3795(90)90139-4}
	
		
			\bibitem{SimicVourinen2012}S.\:Simi\'{c}, M.\:Vuorinen, Landen inequalities for zero-balanced hypergeometric functions, Abstr. Appl. Anal. 2012 (2012), Article ID 932061, 11pp. \url{https://doi.org/10.1155/2012/932061}
			
			\bibitem{SCA2012}H.M.\:Srivastava, M.A.\:Chaudry, R.P.\:Agarwal, The incomplete Pochhammer symbols and their applications to hypergeometric and related functions. Integr. Transf. Spec. Funct. 23 (2012), 659--683. \url{http://dx.doi.org/10.1080/10652469.2011.623350}
		
			\bibitem{YangChuWangJMAA2015} Zh.-H.\:Yang, Yu-M.\:Chu,  M.-K.\:Wang, Monotonicity criterion for the quotient of power series with applications, J. Math. Anal. Appl., 428(2015),  587--604. \url{http://dx.doi.org/10.1016/j.jmaa.2015.03.043}
		
			\bibitem{YangQianChuJIA2017}Z.-H.\:Yang,  W.-M.\:Qian, Y.-M.\:Chu, W.\:Zhang, Monotonicity rule for the quotient of two functions and its application, J. Inequal. Appl., 2017, article 106. \url{http://dx.doi.org/10.1186/s13660-017-1383-2}
			
				\bibitem{YangTianJMAA2019} Z.-H.\:Yang,  J.-F.\:Tian,  Monotonicity rules for the ratio of two Laplace transforms with applications. J. Math. Anal. Appl., 470, 2 (2019), 821--845. \url{https://doi.org/10.1016/j.jmaa.2018.10.034}
		
	\end{thebibliography}
\end{document}